\theoremstyle{plain}
\newtheorem{theorem}{Theorem} 
\newtheorem{lemma}{Lemma}
\newtheorem{corollary}[theorem]{Corollary}
\newtheorem{question}{Question}
\theoremstyle{definition}
\newtheorem{definition}{Definition}
\newtheorem{example}{Example}
\tikzstyle{arrowmid}[0.65]=[decoration={markings,mark=at position #1 with {\arrow{>}}}, postaction={decorate}]
\tikzstyle{every initial by arrow}=[arrowmid,-]
\tikzset{state/.style={circle,fill=red!30,draw=black,minimum size=4pt,inner sep=2pt, outer sep=1pt}}
\tikzset{accepting/.style={fill=green!30,draw=black,minimum size=4pt,inner sep=2pt,double distance=2pt,outer sep=3pt}}
 \tikzstyle{every picture}=[initial text=,auto]
\tikzset{biglabel/.style={rectangle,fill=none,inner sep=4pt}}
\tikzstyle{bdot}=[circle,fill,draw,thick,minimum size=0.8ex,inner sep=0pt]
\tikzstyle{dot}=[circle,draw,thick,minimum size=0.8ex,inner sep=0pt]
\setlist[itemize]{leftmargin=*, label={--}}
\setlist[enumerate]{leftmargin=*, label={\arabic*.}, itemindent=0pt, }%itemsep=0pt, parsep=0pt, topsep=0pt, format={}}
\newcommand{\pw}[1]{{{\raise2pt\hbox{\large$\wp$}}(#1)}}
\newcommand{\store}{\mathord{\downarrow}}
\newcommand{\jump}{\mathord{@}}
\newcommand{\cc}[2]{#1\mathbin{\smallfrown}#2}
\newcommand{\str}[1]{\mathfrak{#1}}
\newcommand{\Prop}{\text{\normalfont\textsc{prop}}}
\NewDocumentCommand{\prop}{sE{:}{p}}{\IfBooleanTF{#1}{\mathrm{#2}}{#2}}
\newcommand{\Nom}{\text{\normalfont\textsc{nom}}}
\NewDocumentCommand{\nom}{sE{:}{s}}{\IfBooleanTF{#1}{\mathrm{#2}}{#2}}
\NewDocumentCommand{\wrld}{sE{:}{m}}{\IfBooleanTF{#1}{\mathrm{#2}}{#2}}
\NewDocumentCommand{\wrlds}{sE{:}{m}}{\overline{\IfBooleanTF{#1}{\mathrm{#2}}{#2}}}
\newcommand{\WVar}{\text{\normalfont\textsc{wvar}}}
\NewDocumentCommand{\wvar}{sE{:}{x}}{\IfBooleanTF{#1}{\mathrm{#2}}{#2}}
\NewDocumentCommand{\plc}{sE{:}{w}}{\IfBooleanTF{#1}{\mathrm{#2}}{#2}}
\NewDocumentCommand{\fovar}{sE{:}{x}}{\textbf{\IfBooleanTF{#1}{\textrm{#2}}{\textit{#2}}}}
\newcommand*{\Mod}{\mathrm{Mod}_{*}}
\NewDocumentCommand{\ST}{E{_}{x}}{\mathit{ST}_{\!\fovar:#1}}
\newcommand*{\SBT}{\mathit{SBT}}
\newcommand*{\dg}{\mathsf{dg}}
\newcommand*{\htp}[2]{\mathsf{htp}^{\leq#1}_{\str{#2}}}
\newcommand*{\http}[1]{\mathsf{http}_{\str{#1}}}
\newcommand*{\fosig}{\Sigma}
\newcommand*{\hyFeat}{\mathcal{F}}
\newcommand*{\hysig}{\Sigma}
\newcommand*{\hymodels}{\Vdash}
\newcommand*{\foProp}{\Prop^{\circ}}
\newcommand*{\fohysig}{\hysig^{\circ}}
\newcommand*{\fohysigE}{\hysig^{\circ+}}
\newcommand*{\fohysigP}{\hysig^{\circ\prime}}
\newcommand*{\fohylang}{\mathcal{L}_{\fohysig}}
\newcommand*{\fohylangE}{\mathcal{L}_{\fohysigE}}
\newcommand*{\fohylangP}{\mathcal{L}_{\fohysigP}}
\newcommand*{\fomodels}{\vDash}
\newcommand{\stacked}[1]{%
  \ifmmode\def\env{array}\else\def\env{tabular}\fi%
  \begin{\env}[t]{@{}l@{}}#1\end{\env}%
}
\begin{document}

\title[Bisimulations in hybrid logic]{A modular bisimulation characterisation
  for fragments of hybrid logic}

\author[Badia]{Guillermo Badia}
\revauthor{Badia, Guillermo}
\address{University of Queensland, Australia}
\email{g.badia@uq.edu.au}

\author[G\u{a}in\u{a}]{Daniel G\u{a}in\u{a}}
\revauthor{G\u{a}in\u{a}, Daniel}
\address{Institute of Mathematics for Industry, Kyushu University, Japan} 
\email{daniel@imi.kyushu-u.ac.jp} 

\author[Knapp]{Alexander Knapp}
\revauthor{Knapp, Alexander}
\address{University of Augsburg, Germany}
\email{knapp@informatik.uni-augsburg.de} 

\author[Kowalski]{Tomasz Kowalski}
\revauthor{Kowalski, Tomasz}
\address{Jagiellonian University, Poland\\
  La Trobe University, Australia\\
  University of Queensland, Australia}
\email{tomasz.s.kowalski@uj.edu.pl}

\author[Wirsing]{Martin Wirsing}
\revauthor{Wirsing, Martin}
\address{Ludwig-Maximilians-University Munich, Germany}
\email{wirsing@lmu.de}

\begin{abstract}
There are known characterisations of several fragments of hybrid logic by means
of invariance under bisimulations of some kind. The fragments include
$\{\store, \jump\}$ with or without nominals (Areces, Blackburn, Marx), $\jump$ with
or without nominals (ten Cate), and $\store$ without nominals (Hodkinson,
Tahiri). Some pairs of these characterisations, however, are incompatible with
one another.  For other fragments of hybrid logic no such characterisations were
known so far.  We prove a generic bisimulation characterisation theorem for all
standard fragments of hybrid logic, in particular for the case with $\store$
and nominals, left open by Hodkinson and Tahiri.
Our characterisation is built on a common
base and for each feature extension adds a specific condition, so it is
modular in an engineering sense.
\end{abstract}

%\begin{keyword}

%% MSC codes here, in the form: \MSC code \sep code
%% or \MSC[2008] code \sep code (2000 is the default)
%\MSC[2020] 03B45 \sep 03C99
% 03B45 --> Modal logic
% 03C95 --> Abstract model theory

%\end{keyword}

%\end{frontmatter}

\maketitle

\section{Introduction}

Hybrid logic dates back to Arthur Prior's works from 1960, but the story is
somewhat convoluted so instead of giving direct references we direct the reader
to Blackburn~\cite{Bla06} for details.  Hybrid logics can be described as a
version of modal logics with an additional machinery to refer to individual
evaluation points.  The ability to refer to specific states has several
  advantages from the point of views of logic and formal specification.  For
  example, hybrid logics allow a more uniform proof
  theory~\cite{brau11,gai-godel} and model theory~\cite{GainaBK22,GainaBK23}
  than non-hybrid modal logics. From a computer science perspective, hybrid
logics are eminently applicable to describing behavioural dynamics: \emph{if at
  this state something holds, then at some state accessible from it something
  else holds}.  This view has been particularly leveraged to the specification
and modelling of reactive and event/data-based
systems~\cite{Madeira_et_al:TCS:2018,%
  Hennicker_Knapp_Madeira:FAC:2021,%
  Rosenberger_Knapp_Roggenbach:FASE:2022}.  Other applications include reasoning
on semi-structured data~\cite{FdR06} and description logics beyond
terminological boxes~\cite{Blackburn_Tzakova:AMAI:1998}.

For definitions and an introductory treatment of modal and hybrid logics we
refer the reader to Blackburn~\emph{et al.}~\cite{BdRV01}; our terminology and
notation comes largely from there, in particular, we use $\downarrow$ and
$\jump$ as the current-state binder and the state-relativisation operator. In
general, we assume familiarity with modal and hybrid logics at the level
of~\cite{BdRV01}, anything beyond that we will define.

Just as modal logic, hybrid logic can also be viewed as a certain special
fragment of monadic second-order logic, with a number of standard tricks to get
around the need for topology. Indeed, for basic hybrid logic which will be our
only concern here, one can dispense with second order altogether and treat
second-order variables as first-order predicates. Again as in modal logic, this
\emph{standard translation} of a hybrid language to a first-order language gives
an interpretation of the former in the latter.

A celebrated characterisation theorem due to van Benthem (see~\cite{vB85})
states that a first-order formula is equivalent to a translation of a modal
formula if and only if it is invariant under bisimulation.  Similar
characterisations exist for certain fragments of hybrid logic: Areces \emph{et
  al.}~\cite{ABM01} give one for the fragment with $\jump$ and $\store$,
the fragment with $\jump$, was characterised by ten
Cate~\cite{ten05}, and Hodkinson and Tahiri
in~\cite{HT10} characterised the fragment with $\store$ but, importantly, without
nominals. As we will see, these characterisations are rather disparate and some
of them do not extend to richer fragments.
In particular, the method used in~\cite{HT10}
does not cover the language with nominals, and the authors pose the problem of
finding a characterisation for this fragment. They also tabulate existing
results, and ask, more generally, which fragments can be characterised by some
form of bisimulations. Here is a version of this table:

\begin{center}\vspace*{.8ex plus 10pt}
\begin{tabular}{ll}
\toprule%\hline
Hybrid features & Invariance under \\
\midrule%\hline  
$\emptyset$ & Bisimulations \\
  $\{\store\}$ w/o nominals & Quasi-injective bisimulations\\
$\{\store\}$ with nominals & \emph{unknown}\\    
  $\{\jump\}$ & $\mathcal{H}(\jump)$-bisimulations\\
$\{\jump,\store\}$ & $\omega$-bisimulations\\  
$\{\exists\}$ & \emph{unknown}  \\
$\{\jump,\exists\}$ & equivalent to FOL \\  
full feature set & equivalent to FOL \\
\bottomrule%\hline  
\end{tabular}\vspace*{.8ex plus 10pt}
\end{center}

In this article we answer these questions by providing a characterisation for
all (sensible) fragments of the hybrid language by means of a fine-tuned version
of $\omega$-bisimulation, a notion introduced in Areces \emph{et
  al.}~\cite{ABM01}, which provides one crucial ingredient for our results.  The
other crucial ingredient is Theorem~\ref{th:lindstrom} in
§5, which shows that first-order sentences cannot distinguish between $\omega$-bisimulation and hybrid counterpart of elementary equivalence.
Its proof is obviously inspired by Lindstr\"om's celebrated characterisation of first-order logic, but more specifically by an earlier use of the same technique by Badia~\cite{Bad16}. Let us remark that although Areces \emph{et
  al.}~\cite{ABM01} do discuss the possibility of applying their notion to other
fragments of the language, they do not state any results in this direction. 
Indeed, the proof of their characterisation theorem for the language
with $\store$, even without $\jump$, uses the condition for $\jump$ essentially,
to deal with nominals.
Our results are completely modular, in the engineering
sense that you can choose a subset $\hyFeat$ of the features of a hybrid
  language, and we give a characterisation theorem for the language fragment
for $\hyFeat$ using conditions pertaining to $\hyFeat$.

\section{Terminology and notation}

As we already mentioned, we follow the terminology of
Blackburn~\emph{et~al.}~\cite{BdRV01} for modal and hybrid logic. For general
model theory, we follow Hodges~\cite{Hod93}, at least in spirit.  We write
$\str{M} = (M,\allowbreak R^\str{M},\allowbreak f^\str{M},\allowbreak
c^\str{M},\allowbreak \dots)$ for structures, with $M$ being the universe, and
$R^\str{M}, f^\str{M}, c^\str{M}$ interpretations of a relation $R$, a function
$f$, and a constant $c$ of the appropriate signature $\fosig$. We use the
same notation $\str{N} = (N,\allowbreak R^\str{N},\allowbreak V^\str{N})$ for
Kripke structures, where $R$ is a
binary accessibility relation and $V$ a valuation. Often we will consider models
with a distinguished element; we will call them \emph{pointed} and write
$(\str{M}, \wrld:m)$ for a model $\str{M}$ with a distinguished element
$\wrld:m\in M$. The class of all pointed models of a formula $\phi(x)$ with a
free variable $x$ will be denoted by $\Mod(\phi)$. As usual, we dispense with
superscripts unless we have a good reason to use them; we also use typical
shorthand notation for sequences, namely, $\wrlds:m = (\wrld:m_1,\allowbreak
\dots,\allowbreak \wrld:m_n)$ together with $\wrlds:m(i) = \wrld:m_i$.  If
convenient, especially for comparing sequences of different lengths, we can also
view sequences of elements of $M$ as finite words over the alphabet $M$.
% We reserve $\hysig$, possibly with decorations, to refer to signatures. In
% particular, for a formula $\phi$ we use $\hysig(\phi)$ for the signature
% explicit in $\phi$.  A single exception to this convention will be
% $\hyFeat$, which we reserve to refer to fragments of the full hybrid
% signature, in §5 and later.

We deal exclusively with hybrid propositional logic, so the language is built
out of the atomic propositions in $\Prop \cup \Nom \cup \WVar$, where $\Prop$ is
the set of propositional variables, $\Nom$ the set of nominals, and $\WVar$ the
set of world variables.  Formulas are defined by the grammar
\begin{equation*}
\phi ::= \bot \mid \prop:p \mid \nom:s \mid \wvar:x \mid \neg\phi \mid \phi \vee \psi
\mid \Diamond\phi \mid \store \wvar:x \phi \mid \jump_{\plc:w}\psi \mid \exists x\phi 
\end{equation*}
where $\prop:p \in \Prop$, $\nom:s \in \Nom$, $\wvar:x\in\WVar$, and $\plc:w \in
\WVar \cup \Nom$. Even without $\exists$, hybrid logic is more expressive than
modal logic, since for example $\store \wvar:x \Diamond x$ expresses reflexivity
of the current state. With $\exists$ it is even stronger, for example $\neg
\exists \wvar:x(\wvar:x\wedge\Diamond \wvar:x)$ expresses irreflexivity of the
accessibility relation. In presence of $\exists$ the operation $\store$ is
redundant as $\store \wvar:x \phi$ is equivalent to $\exists
\wvar:x(\wvar:x\wedge \phi)$.  In presence of $\exists$ and $\jump$ the expressivity
of hybrid logic is the same as that of full first-order logic.

Extending the familiar concept from modal logic, we define the \emph{degree}
$\dg(\phi)$ of a hybrid formula $\phi$ in the usual
recursive way:
\begin{itemize}
  \item $\dg(\phi) = 0$ for $\phi\in\Prop\cup\Nom\cup\WVar$,
  \item $\dg(-\phi) = \dg(\phi)$ for $-\in\{\neg, \jump_{\plc:w}\}$,
  \item $\dg(\phi \mathbin{\vee} \psi) = \max\{\dg(\phi),\dg(\psi)\}$,
  \item $\dg(-\phi) = \dg(\phi) + 1$ for $-\in\{\Diamond, \store \wvar:x,
\exists \wvar:x\}$.
\end{itemize}
Note that the degree increases only on formulas whose \emph{standard
  translations} (to be recalled later) involve quantification.

\subsection{Semantics of hybrid logic}

A model for the hybrid language is a structure $\str{M} = \bigl(M, R, \Nom,
V\bigr)$, where $V\colon \Prop\to\pw{M}$ is a valuation fixing which
propositions hold in which worlds.  The presence of $\Nom$ in the model is
justified by the usual constants-name-themselves trick.  An \emph{assignment} is
any map $\nu\colon \WVar\to M$. For an assignment $\nu$, a world variable $x$,
and an element $\wrld:m\in M$ we define the $\wvar:x$-variant
$\nu^{\wvar:x}_{\wrld:m}$ of $\nu$ to be the map
\begin{equation*}
\nu^{\wvar:x}_{\wrld:m}(\wvar:y) =\begin{cases}
\wrld:m & \text{ if } \wvar:y = \wvar:x,\\
\nu(\wvar:y) & \text{ if } y \neq \wvar:x.
\end{cases}                       
\end{equation*}
Given a model $\str{M}$, an assignment $\nu\colon \WVar\to M$, and
an element $\wrld:m\in M$, we define inductively
\begin{itemize}
  \item $\str{M},\nu,\wrld:m \hymodels \bot$ never holds
  \item $\str{M},\nu,\wrld:m \hymodels \prop:p$ if $\prop:p \in \Prop$ and
$\wrld:m\in V(\prop:p)$
  \item $\str{M},\nu,\wrld:m \hymodels \nom:s$ if $\nom:s \in \Nom$ and $\wrld:m =
\nom:s^{\str{M}}$
  \item $\str{M},\nu,\wrld:m \hymodels \wvar:x$ if $\wvar:x \in \WVar$ and $\wrld:m
= \nu(\wvar:x)$
  \item $\str{M},\nu,\wrld:m \hymodels \phi \vee \psi$, if $\str{M},\nu,\wrld:m
\hymodels \phi$ or $\str{M},\nu,\wrld:m\hymodels\psi$
  \item $\str{M},\nu,\wrld:m \hymodels \neg\phi$ if $\str{M},\nu,\wrld:m \not\hymodels
\phi$
  \item $\str{M},\nu,\wrld:m \hymodels \Diamond\phi$ if
$\str{M},\nu,\wrld:m'\hymodels\phi$ for some $\wrld:m'$ with $\wrld:m \mathrel{R}
\wrld:m'$
  \item $\str{M},\nu,\wrld:m \hymodels \store \wvar:x\phi$ if $\wvar:x \in \WVar$
and $\str{M}, \nu^{\wvar:x}_{\wrld:m},\wrld:m \hymodels \phi$
  \item $\str{M},\nu,\wrld:m \hymodels \jump_{\nom:s}\phi$ if $\nom:s \in \Nom$ and
$\str{M},\nu,\nom:s^{\str{M}} \hymodels \phi$
  \item $\str{M},\nu,\wrld:m \hymodels \jump_{\wvar:x}\phi$ if $\wvar:x \in \WVar$ and
$\str{M},\nu,\nu^{\str{M}}(\wvar:x) \hymodels \phi$
  \item $\str{M},\nu,\wrld:m \hymodels \exists \wvar:x\phi$ if $\wvar:x \in \WVar$
and $\str{M},\nu^{\wvar:x}_{\wrld:m'},\wrld:m \hymodels \phi$ for some
$\wrld:m'\in M$
\end{itemize}
The assignment $\nu^\str{M}$ can equivalently be given as a sequence of
$\wrlds:m$ of elements of $M$; we will use this notation extensively in
§5. The operators $\store \wvar:x$ and $\exists \wvar:x$ \emph{bind} the
world variable $\wvar:x$, whereas $\jump_{\wvar:x}$ does not (to see this, note that
evaluating $\store \wvar:x \phi$ at $\wrld:m$ is independent of the assignment,
whereas $\jump_{\wvar:x}$ is not). An occurrence of a world variable $\wvar:x$ in a
formula $\phi$ is \emph{free} if it is not in the scope of an operator binding
$\wvar:x$, a world variable $x$ is free in $\phi$ if it has at least one free
occurrence in $\phi$. A \emph{hybrid sentence} is a hybrid formula with no free
world variables. For a sentence $\phi$ the assignment $\nu$ is irrelevant, so we
write $\str{M},\wrld:m \hymodels \phi$.

\subsection{Translations and fragments}
To each hybrid language there corresponds a first-order language consisting of a
binary predicate $R$, a set $\Nom$ of constants, and the set $\foProp$ of
predicates $P$, one for each $\prop:p \in \Prop$. For a hybrid signature
$\hysig = (\Prop,\allowbreak \Nom)$, let $\fohysig = (R,\allowbreak
  \foProp,\allowbreak \Nom)$ be the first-order signature corresponding to
$\hysig$, and let $\fohylang$ denote the corresponding first-order
language. Models over $\fohysig$ are structures $\str{M} = (M,\allowbreak
R,\allowbreak \foProp,\allowbreak \Nom)$, such that $(M,\allowbreak
R,\allowbreak \Nom,\allowbreak V)$ is a hybrid model.  Assume $\WVar
=\{z_i:i\in\omega\}$. We pick two new variables $\fovar:x$ and $\fovar:y$, and
define \emph{standard translations} $\ST_x(\phi)$ and $\ST_y(\phi)$ of a hybrid
formula $\phi$ by simultaneous recursion as follows, with $\prop:p\in \Prop$,
$\plc:w\in\Nom\cup\WVar$, and $z\in \WVar$: 
\begin{align*}
\ST_x(\bot) &= \bot & \ST_y(\bot) &= \bot    \\ 
\ST_x(\prop:p) &= P(\fovar:x) & \ST_y(\prop:p) &= P(\fovar:y) \\
\ST_x(\plc:w) &= \plc:w \approx \fovar:x & \ST_y(\plc:w) &= \plc:w \approx \fovar:y\\
\ST_x(\neg\phi) &= \neg\ST_x(\phi) & \ST_y(\neg\phi) &= \neg\ST_y(\phi)  \\
\ST_x(\phi\vee\psi) &= \ST_x(\phi) \vee \ST_x(\psi) &
\ST_y(\phi\vee\psi) &= \ST_y(\phi) \vee \ST_y(\psi) \\              
\ST_x(\Diamond\phi) &= \exists \fovar:y: \wvar:x \mathrel{R} \fovar:y
\wedge \ST_y(\phi) &
\ST_y(\Diamond\phi) &= \exists \fovar:x: \wvar:y \mathrel{R} \fovar:x
                      \wedge \ST_x(\phi)\\
\ST_x(\store \wvar:z \phi) &= \exists \wvar:z: \wvar:z \approx \fovar:x
                             \wedge \ST_x(\phi) &
\ST_y(\store \wvar:z \phi) &= \exists \wvar:z: \wvar:z \approx \fovar:y
\wedge \ST_y(\phi) \\                                                  
\ST_x(\jump_{\plc:w} \phi) &= \fovar:x \approx \plc:w \wedge \ST_x(\phi) &
\ST_y(\jump_{\plc:w} \phi) &= \fovar:y \approx \plc:w \wedge \ST_y(\phi) \\
\ST_x(\exists \wvar:z\phi) &= \exists \wvar:z: z \approx \fovar:y \wedge \ST_y(\phi) &
\ST_y(\exists \wvar:z\phi) &= \exists \wvar:z: z \approx \fovar:x \wedge \ST_x(\phi)  
\end{align*}
The way we define translations is slightly different from Areces~\emph{et
  al.}~\cite{ABM01}, but our definitions make explicit the quantificational aspect of
$\Diamond$, $\store$ and $\exists$, and the identity-like aspect of
$\jump$. The resulting concept is essentially the same as all other particular
forms of standard translations. Namely, we have that for each hybrid sentence
$\phi$ (no free world variables), the formula $\ST_x(\phi)$ has precisely one
variable, $\fovar:x$ or $\fovar:y$, free, and moreover
\bgroup\makeatletter\@mathmargin28pt\makeatother
\begin{equation}\tag{SE}\label{eq:SE}
\str{M},\wrld:m\hymodels\phi \iff \str{M} \fomodels \ST_x(\phi)[\wrld:m]
\end{equation}\egroup
where the right-hand side is the usual first-order satisfaction of $\ST_x(\phi)$
on evaluating $\fovar:x$ to $\wrld:m$; the same holds for $\ST_y(\phi)$ of
course. Alternatively, the right-hand side can be expressed in a signature
expanded by a single constant intepreted as $\wrld:m$, by $(\str{M}, \fovar:m)
\fomodels \ST_m(\phi)$. This would have the advantage of translating sentences to
sentences.

The full hybrid language over $\hysig$ is exactly as expressive as the
first-order language over $\fohysig$. To say more about expressivity, we
recall the \emph{standard back translation} introduced in Hodkinson and
Tahiri~\cite{HT10}.  Let $F$ be the function from $\fohylang$ to the hybrid
  language over $\hysig$ defined inductively by putting
\begin{align*}
F(\bot) &= \bot\\
F(P(\plc:w)) &= \jump_{F(\plc:w)}\prop:p\\
F(\wvar:x) &= \wvar:x\\
F(\nom:s) &= \nom:s\\
F(\neg\phi) &= \neg F(\phi)\\
F(\phi\vee\psi) &= F(\phi)\vee F(\psi)\\
F(\exists \wvar:x : \phi) &= \exists \wvar:x F(\phi)\\
F(\plc:w \mathrel{R} \plc:w') &= \jump_{F(\plc:w)}\Diamond F(\plc:w')\\
F(\plc:w \approx \plc:w') &= \jump_{F(\plc:w)}F(\plc:w')
\end{align*} 
The standard back translation of a formula $\phi(\wvar:x)$, with $\wvar:x$ its
only free variable, is the hybrid sentence $\SBT(\phi(\wvar:x)) = \exists
\wvar:x: \wvar:x\wedge F(\phi(\wvar:x))$.  It is easy to show that
\begin{equation*}
\str{M}\fomodels \phi[\wrld:m] \iff  \str{M},\wrld:m\hymodels\SBT(\phi(\wvar:x)) 
\end{equation*}
for any model $\str{M}$ and any $\wrld:m\in M$. It is clear from the definitions
that in presence of $\exists$ and $\jump$ the hybrid language captures the whole
first-order one.  Letting $\hyFeat = \{ \jump, \store, \exists, \Nom \}$
  be the hybrid language features beyond modal logic, and recalling that
  $\store$ is expressible in presence of $\exists$, we obtain five interesting
  fragments of the language over $\hyFeat$, namely, $\{ \jump \}$, $\{ \store \}$, $\{ \exists
  \}$, $\{ \jump, \store \}$ and $\{\jump, \exists\}$, each with and without
  nominals.  For each of these there is a natural question about a
characterisation of its standard translation, that is, a characterisation of the
set of precisely those first-order formulas that are equivalent to translations
of hybrid sentences from a given fragment.

An appropriate notion for such characterisations proved to be
\emph{bisimulation}, which we will now recall.  Let $\str{M} =
(M,R^\str{M},V^\str{M})$ and $\str{N} = (N,R^\str{N}, V^\str{N})$ be models. A
relation $B \subseteq M \times N$ is a bisimulation between $\str{M}$ and
$\str{N}$ if %$(w_0(M), w_0(N)) \in B$ and
for all $(\wrld:m, \wrld:n) \in B$ the following conditions hold:
\begin{itemize}[format={\normalfont}, widest={(forth)}]
  \item[(prop)] $\wrld:m\in V^\str{M}(\prop:p)$ iff $n\in V^\str{N}(\prop:p)$
for all $\prop:p\in\Prop$
\item[(forth)] for all $\wrld:m' \in M$ with $(\wrld:m, \wrld:m') \in
R^{\str{M}}$ there is an $\wrld:n' \in N$ such that $(\wrld:n, \wrld:n') \in
R^{\str{N}}$ and $(\wrld:m', \wrld:n') \in B$;
  \item[(back)] for all $\wrld:n' \in N$ with $(\wrld:n, \wrld:n') \in
R^{\str{M}}$ there is an $\wrld:m' \in M$ such that $(\wrld:m, \wrld:m') \in
R^{\str{M}}$ and $(\wrld:m', \wrld:n') \in B$.
\end{itemize}
So defined, bisimulation is a similarity relation between models, hence by a
terminological quirk one says that two models related by a bisimulation are
\emph{bisimilar} (a consistent terminology would have the relation called
\emph{bisimilarity}, but while logics are typically consistent, logicians are
not).

For a purely modal language, van Benthem~\cite{vB85} proved that a first-order
formula is equivalent to the standard translation of a modal sentence if, and
only if, it is invariant under bisimulations. Adding nominals to the language
breaks this correspondence. For consider the structures $\str{M}$ and $\str{N}$
in Figure~\ref{fig:nominals}, with $B = \{(\wrld*:m_0,\wrld*:n_0),
(\wrld*:m_1,\wrld*:n_1), (\wrld*:m_2,\wrld*:n_1)\}$ indicated by dashed
lines. Taking $V^\str{M}(\prop:p) = M$ and $V^\str{N}(\prop:p) = N$ on all
$\prop:p\in\Prop$, we immediately have that $B$ is a bisimulation. Now, assuming
that for distinct nominals $\nom*:s$ and $\nom*:t$ we have $\nom*:s^\str{M} =
\wrld*:m_0$, $\nom*:t^\str{M} = \wrld*:m_1$, $\nom*:s^\str{N} = \wrld*:n_0$,
$\nom*:t^\str{N} = \wrld*:n_1$ we see that $B$ does not preserve nominals, since
$\str{N}, \wrld*:n_1 \hymodels \nom*:t$ but $\str{M},\wrld*:m_2 \not\hymodels
\nom*:t$.
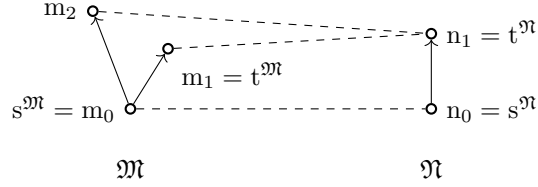
\begin{figure}\centering%
\begin{tikzpicture}
\begin{scope}
\node[dot, label=left:${\nom*:s^\str{M}=\wrld*:m_0}$] (m0) at (0, 0) {};
\node[dot, label=below right:${\wrld*:m_1=\nom*:t^\str{M}}$] (m1) at (0.5, 0.8) {};
\node[dot, label=left:${\wrld*:m_2}$] (m2) at (-0.5, 1.3) {};
\path[->]
  (m0) edge (m1)
  (m0) edge (m2)
  ;
\end{scope}
\begin{scope}[xshift=4cm]
\node[dot, label=right:${\wrld*:n_0=\nom*:s^\str{N}}$] (n0) at (0, 0) {};
\node[dot, label=right:${\wrld*:n_1=\nom*:t^\str{N}}$] (n1) at (0, 1) {};
\path[->] 
  (n0) edge (n1)
;
\end{scope}
\path[-,dashed]
(m0) edge (n0)
(m1) edge (n1)
(m2) edge (n1)
;
\node[] (M) at (0, -.8) {$\str{M}$};
\node[] (N) at (4, -.8) {$\str{N}$};
\end{tikzpicture}%
\caption{Bisimilar but not preserving nominals}\label{fig:nominals}
\end{figure}
The problem is easily repaired by strengthening the notion of bisimulation.  It
suffices to add the condition
\begin{itemize}[widest={(nom)}]
  \item[(nom)] $\wrld:m = \nom:s^\str{M}$ iff $\wrld:n = \nom:s^\str{N}$ for
every $\nom:s \in \Nom$,
\end{itemize}
to recover the characterisation. Note that (nom) forces bisimulations to be
bijections between $\Nom^\str{M}$ and $\Nom^\str{N}$.

A similar problem arises if we add $\store$ and world variables (but not
nominals) to the purely modal language.  For consider the structures $\str{M}$
and $\str{N}$ in Figure~\ref{fig:cycle}, with $B$ given by the dashed lines.
Let $V^\str{M}(\prop:p) = M$ and $V^\str{N}(\prop:p) = N$ for all
$\prop:p\in\Prop$; moreover, let $\nu^\str{M}(\wvar*:x_i) = \wrld*:m_i$ and
$\nu^\str{N}(\wvar*:x_i) = \wrld*:n_{i\,\mathbin{\mathrm{mod}}\,2}$ for all
$\wvar*:x_i \in \WVar$.
\begin{figure}\centering%
\begin{tikzpicture}
\begin{scope}
\node[dot, label=left:${\nu^\str{M}(\wvar*:x_0) = \wrld*:m_0}$] (m0) at (0, 0) {};
\node[dot, label=left:${\nu^\str{M}(\wvar*:x_1) = \wrld*:m_1}$] (m1) at (0, 1) {};
\node[dot, label=left:${\nu^\str{M}(\wvar*:x_2) = \wrld*:m_2}$] (m2) at (0, 2) {};
\node[dot, label=left:${\nu^\str{M}(\wvar*:x_3) = \wrld*:m_3}$, label=above:$\vdots$] (m3) at
(0, 3) {};
\path[->]
  (m0) edge (m1)
  (m1) edge (m2)
  (m2) edge (m3)
  ;
\end{scope}
\begin{scope}[xshift=4cm]
\node[dot, label=right:${\wrld*:n_0 = \nu^\str{M}(\wvar*:x_0) = \nu^\str{M}(\wvar*:x_2) = \dots}$] (n0) at (0, 0) {};
\node[dot, label=right:${\wrld*:n_1 = \nu^\str{M}(\wvar*:x_1) = \nu^\str{M}(\wvar*:x_3) = \dots}$] (n1) at (0, 1) {};
\path[<->] 
  (n0) edge (n1)
;
\end{scope}
\path[-,dashed]
(m0) edge (n0)
(m1) edge (n1)
(m2) edge (n0)
(m3) edge (n1);
\node[] (M) at (0, -.8) {$\str{M}$};
\node[] (N) at (4, -.8) {$\str{N}$};
\end{tikzpicture}%
\vspace*{-2ex}
\caption{Bisimilar but not preserving $\store$}\label{fig:cycle}
\end{figure}
Then $B$ is a bisimulation not preserving $\store$, as for example
$\str{N},\wrld*:n_0\hymodels \store \wvar*:x \Diamond\Diamond \wvar*:x$, but
$\str{M},\wrld*:m_0\not\hymodels\store \wvar*:x \Diamond\Diamond \wvar*:x$. Similar
examples can easily be constructed with $\str{N}$ being a cycle of length $\ell$
(in particular, a loop).  As an aside, note that it would not be reasonable to
require preservation of open formulas, as nothing short of a bijection can
preserve world variables.

\section{Quasi-injective bisimulations}

To deal with $\store$, Blackburn and Seligman~\cite{BS98} introduce
\emph{quasi-injective bisimulations}, that is, relations satisfying the usual
bisimulation conditions together with the requirement that distinct bisimulation
images of the same state are mutually inaccessible.  To be precise, let $R^*$ be
the reflexive, transitive closure of $R$ and let us say that $\wrld:m'$ is
\emph{reachable} in $M$ from $\wrld:m$ if $\wrld:m' \in R^*(\wrld:m)$.  A
bisimulation $B \subseteq M \times N$ is \emph{quasi-injective} if $(\wrld:m,
\wrld:n), (\wrld:m, \wrld:n') \in B$ and $\wrld:n \neq \wrld:n'$ imply that
$\wrld:n' \notin R^*(\wrld:n)$ and $\wrld:n \notin R^*(\wrld:n')$, and the
analogous symmetric condition also holds.  They prove that formulas of the
language with $\store$ but without nominals are preserved under
quasi-injective bisimulations, hence any formula equivalent to a standard
translation of a sentence of that language is invariant under quasi-injective
bisimulations. Hodkinson and Tahiri~\cite{HT10} prove the converse, thus
obtaining a characterisation theorem.

\begin{theorem}
For a hybrid language over a signature $\hysig$ without nominals and only
involving $\store$, the following are equivalent for a first-order formula
$\phi$ over $\fohysig$:
\begin{enumerate}
  \item $\phi$ is equivalent to a translation of a hybrid sentence.
  \item $\phi$ is invariant under quasi-injective bisimulations.
\end{enumerate}  
\end{theorem}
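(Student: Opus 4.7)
The plan is to prove both directions separately. The implication (1) $\Rightarrow$ (2) is the preservation part, essentially due to Blackburn and Seligman: by induction on a hybrid $\store$-sentence $\psi$, one shows that if $B$ is a quasi-injective bisimulation between $\str{M}$ and $\str{N}$ with $(\wrld:m, \wrld:n) \in B$, then $\str{M}, \nu, \wrld:m \hymodels \psi$ iff $\str{N}, \nu', \wrld:n \hymodels \psi$ for assignments $\nu, \nu'$ whose values are pointwise $B$-related. The atomic, Boolean and $\Diamond$-cases are routine; the decisive case is $\store \wvar:x \theta$, where quasi-injectivity is precisely what lets one identify the world ``remembered'' by $\wvar:x$ after a chain of $\Diamond$-moves, since no two distinct $B$-images of a single state are mutually reachable.

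For the converse (2) $\Rightarrow$ (1), I would run the classical van Benthem-style argument. Put $\Gamma(\fovar:x) = \{\ST_x(\psi) : \psi \text{ a hybrid } \store\text{-sentence with } \phi \fomodels \ST_x(\psi)\}$. If $\Gamma \fomodels \phi$, compactness yields a finite $\Gamma_0 \subseteq \Gamma$ equivalent to $\phi$, and Boolean closure of the $\store$-fragment collapses $\Gamma_0$ to a single $\ST_x(\psi_0)$. To see $\Gamma \fomodels \phi$, take any $(\str{M}, \wrld:m) \fomodels \Gamma$. A short argument using compactness and Boolean closure shows that the hybrid $\store$-theory of $(\str{M}, \wrld:m)$ is consistent with $\phi$: otherwise there would be a hybrid $\store$-sentence $\psi$ with $\phi \fomodels \ST_x(\psi)$ but $(\str{M}, \wrld:m) \not\hymodels \psi$, contradicting $(\str{M}, \wrld:m) \fomodels \Gamma$. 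So some $(\str{N}, \wrld:n) \fomodels \phi$ agrees with $(\str{M}, \wrld:m)$ on every hybrid $\store$-sentence.

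Passing to $\omega$-saturated elementary extensions $(\str{M}^+, \wrld:m)$ and $(\str{N}^+, \wrld:n)$, I would construct a quasi-injective bisimulation $B$ between them with $(\wrld:m, \wrld:n) \in B$ by a back-and-forth construction. Invariance of $\phi$ under such bisimulations, together with elementarity of the extensions, then delivers $(\str{M}, \wrld:m) \fomodels \phi$, closing the argument.

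The main obstacle is maintaining quasi-injectivity during the back-and-forth. Standard $\omega$-saturation supplies candidates matching any finite type, but extending $B$ by a new pair must not create two partners $\wrld:n', \wrld:n''$ of mutually reachable $\wrld:m', \wrld:m''$ that share a common image, and symmetrically. The resolution exploits the $\store$-fragment's ability to ``pin'' previously visited worlds as variables, together with the \emph{absence of nominals}: with no global names to force a specific witness, one can always refine the choice to a fresh candidate preserving quasi-injectivity. This is exactly the step that collapses once nominals are admitted, and is why the authors must later move to the more uniform $\omega$-bisimulation method.
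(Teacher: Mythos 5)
Your direction (1)$\Rightarrow$(2) is essentially the Blackburn--Seligman preservation argument and is fine in outline, with one caveat: the induction invariant has to be stronger than ``assignments whose values are pointwise $B$-related''. For the atomic case of a world variable you also need that the current evaluation points are $R^*$-reachable from the stored values; only then does quasi-injectivity force $\nu(\wvar:x)=\wrld:m$ iff $\nu'(\wvar:x)=\wrld:n$. This is exactly the form in which quasi-injectivity is exploited in Lemma~\ref{lem:q-injective-implies-omega}. (Note also that the paper states this theorem as a known result and does not reprove it: the preservation half is cited from Blackburn--Seligman, the converse from Hodkinson--Tahiri.)

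The genuine gap is in (2)$\Rightarrow$(1). The compactness detour through the set of $\store$-consequences of $\phi$ is standard and fine; the false step is the claim that $\omega$-saturated elementary extensions of two pointed models with the same $\store$-theory can be linked by a quasi-injective bisimulation containing the distinguished pair. The paper's Example~\ref{ex:overkill} refutes this: $(\str{M},m_0)$ and $(\str{N},n_0)$ there satisfy the same $\store$-sentences (no nominals), yet no elementary extensions of them, saturated or not, admit a quasi-injective bisimulation containing $(m_0,n_0)$. Indeed, the facts $m_0\mathrel{R}m_1$, $m_0\mathrel{R}m_2$, $m_1\mathrel{R}m_2$, $m_1\neq m_2$, and ``$n_0$ has exactly one $R$-successor'' are first-order with parameters and persist to every elementary extension; (forth) then forces both $(m_1,n_1)$ and $(m_2,n_1)$ into any bisimulation containing $(m_0,n_0)$, and since $m_2\in R^*(m_1)$ this violates the symmetric quasi-injectivity clause. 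So the forced witness that wrecks quasi-injectivity comes from the accessibility structure, not from nominals: neither the absence of names nor $\omega$-saturation supplies a ``fresh candidate'', because there is none. (Example~\ref{ex:cardinality} is a further warning sign: quasi-injective bisimilarity even constrains cardinalities of reachable cones, so it is not the kind of relation a bare saturation back-and-forth can be expected to produce.) You also do not explain how the freedom in choosing $(\str{N},\wrld:n)$ could restore the needed lemma. This is precisely why Hodkinson and Tahiri's proof of this direction is not a direct back-and-forth but goes through their $\store$-unravelling machinery (their Defs.~3.7--3.8 and Prop.~3.10, referred to in Example~\ref{ex:overkill}), so that the eventual matching is performed on tree-like structures where quasi-injectivity can be secured, and it is why the present paper abandons quasi-injective bisimulations in favour of bisimulations with memory. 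Without some such additional idea your outline does not close.
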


Quasi-injective bisimulation faces an obvious problem with nominals: even
mutually inaccessible states cannot be bisimilar to a single state whenever that
single state has a name. Aware of this, Hodkinson and Tahiri ask whether a
characterisation can be obtained for the language including nominals. The
example below shows that $\store$-preserving bisimulations need not be
quasi-injective, suggesting that quasi-injectivity is too strong; see also
Example~\ref{ex:cardinality} in §4.

\begin{example}\label{ex:overkill}
Consider structures $\str{M}$ and $\str{N}$ of Figure~\ref{fig:not-q-injective}.
The relation $B$ indicated by dashed arrows is clearly a bisimulation; also
clearly it is not quasi-injective. 
\begin{figure}
\begin{center}
\begin{tikzpicture}
\begin{scope}
\node[dot, label=left:$\wrld*:m_0$] (10) at (0, -1) {};
\node[dot, label=below right:$\wrld*:m_1$] (11) at (0, 0) {};
\node[dot, label=left:$\wrld*:m_2$] (12) at (0, 1) {};
\node[dot, label=left:$\wrld*:m_3$] (13) at (0, 2) {};
\node[dot, label=left:$\wrld*:m_4$, label=above:$\vdots$] (14) at (0, 3) {};
\path[->]
  (10) edge (11)
  (10) edge[bend left=45] (12)
  (11) edge (12)
  (12) edge (13)
  (13) edge (14)
  ;
\end{scope}
\begin{scope}[xshift=3cm]
\node[dot, label=below right:$\wrld*:n_0$] (20) at (0, -1) {};
\node[dot, label=below right:$\wrld*:n_1$] (21) at (0, 0.5) {};
\node[dot, label=below right:$\wrld*:n_2$] (22) at (0, 1.5) {};
\node[dot, label=below right:$\wrld*:n_3$, label=above:$\vdots$] (23) at (0, 2.5) {};
\path[->] 
  (20) edge (21)
  (21) edge (22)
  (22) edge (23)
;
\end{scope}
\path[-,dashed]
(10) edge (20)
(11) edge (21)
(12) edge (22)
(12) edge (21)
(13) edge (23)
(13) edge (22)
(14) edge (23);
\begin{scope}[xshift=7cm]
\node[dot, label=right:$\wrld*:u_0$] (30) at (0, -1) {};
\node[dot, label=right:$\wrld*:u_1$] (31) at (0.5, 0) {};
\node[dot, label=right:$\wrld*:u_2$] (32) at (0.5, 1) {};
\node[dot, label=right:$\wrld*:u_3$] (33) at (0.5, 2) {};
\node[dot, label=right:$\wrld*:u_4$, label=above:$\vdots$] (34)
at (0.5, 3) {};
\node[dot, label=right:$\wrld*:u'_2$] (32a) at (-0.5, 0.6) {};
\node[dot, label=right:$\wrld*:u'_3$] (33a) at (-0.5, 1.6) {};
\node[dot, label=right:$\wrld*:u'_4$, label=above:$\vdots$] (34a)
at (-0.5, 2.6) {};
\path[->]
  (30) edge (31)  (30) edge (32a)
  (31) edge (32)  (32a) edge (33a)
  (32) edge (33)  (33a) edge (34a)
  (33) edge (34)
  ;
\end{scope}
\path[-,dashed]
(30) edge (20)
(31) edge (21)
(32) edge (22)
(32a) edge (21)
(33) edge (23)
(33a) edge (22)
(34a) edge (23);
\node[] (M) at (0, -2) {$\str{M}$};
\node[] (N) at (3, -2) {$\str{N}$};
\node[] (N) at (7, -2) {$\str{U}$};
\end{tikzpicture}
\end{center}%
\vspace*{-1.5ex}%
\caption{$\str{N}$ and $\str{M}$ are $\store$-bisimilar
  but not quasi-injectively bisimilar; 
  $\str{N}$ and $\str{U}$ are quasi-injectively bisimilar.}\label{fig:not-q-injective} 
\end{figure}
Yet, $B$ preserves $\store$-formulas, as we will now show. To do so, we make
use of \emph{$\store$-unravelling}, defined in Hodkinson and
Tahiri~\cite[Defs.~3.7 and~3.8]{HT10}. For our purposes it suffices to observe
that for acyclic graphs $\store$-unravelling coincides with the usual
unravelling. Hodkinson and Tahiri show (see~\cite[Prop.~3.10]{HT10}) that a
model and its $\store$-unravelling are $\store$-bisimilar, hence
$\store$-sentences are invariant under $\store$-unravelling. Therefore,
$\store$-sentences are invariant under unravelling acyclic digraphs. Now
consider structures $\str{U}$ and $\str{N}$ of Figure~\ref{fig:not-q-injective},
and observe that (i) $\str{U}$ is an unravelling of $\str{M}$, (ii) $\str{M}$
and $\str{N}$ are acyclic digraphs, (iii) $\str{U}$ and $\str{N}$ are
quasi-injectively bisimilar. It follows that $\store$-sentences are invariant
under $B\subseteq M\times N$. But $B$ is not quasi-injective.
\end{example}

\section{Bisimulations with memory}

Our approach to bisimulations is motivated by Areces~\emph{et al.}~\cite{ABM01}
where in order to arrive at a notion of bisimulation appropriate for hybrid
languages the bisimilarity relation is endowed with memory: pairs of states are
not bisimilar \emph{per se} but in connection to their histories---we compare
strings over the alphabet of states rather than single
states. In~\cite[Sect.~3.3]{ABM01}, this idea is fleshed out in the form of $k$-
and $\omega$-bisimulations.  We recall the definitions below, with inessential
modifications in presentation and terminology.

%Note that~\cite{ABM01} views bisimulations as
%relations between \emph{frames} rather than models, but it is harmless and should
%not cause confusion: every bisimulation between frames gives rise to a
%bisimulation between models by a suitable choice of valuations, and every
%bisimulation between models arises this way. 

\begin{definition}[cf.~{\cite[Sect.~3.3]{ABM01}}]\label{def:Areces-k-bisim}
Let $\str{M}$ and $\str{N}$ be Kripke structures.  
A relation $B_k \subseteq (M^k
\times M) \times (N^k \times N)$ is a \emph{$k$-bisimulation} if for all
$\bigl((\wrlds:m, \wrld:m), (\wrlds:n, \wrld:n)\bigl) \in B_k$ the following hold:
\begin{itemize}[widest={(forth)}]
  \item[(prop)] $\wrld:m \in V(\prop:p)$ iff $\wrld:n \in V'(\prop:p)$ for all
$\prop:p \in \Prop$;
  \item[(nom)] $\wrld:m = \nom:s^\str{M}$ iff $\wrld:n = \nom:s^\str{N}$ for
every $\nom:s\in\Nom$;
  \item[(wvar)] $\wrlds:m(j) = \wrld:m$ iff $\wrlds:n(j) = \wrld:n$ for all $1
\leq j \leq k$;
  \item[(forth)] for all $\wrld:m' \in M$ with $(\wrld:m, \wrld:m') \in
R^\str{M}$ there is an $\wrld:n' \in N$ with $(\wrld:n, \wrld:n') \in R^\str{N}$
and $\bigl((\wrlds:m, \wrld:m'), (\wrlds:n, \wrld:n')\bigr) \in B_k$;
  \item[(back)] for all $\wrld:n' \in N$ with $(\wrld:n, \wrld:n') \in
R^\str{N}$ there is an $\wrld:m'\in M$ with $(\wrld:m, \wrld:m') \in R^\str{M}$
and $\bigl((\wrlds:m, \wrld:m'), (\wrlds:n,\wrld:n')\bigr) \in B_k$;
  \item[(atv)] $\Bigl(\bigl(\wrlds:m, \wrlds:m(j)\bigr),
\bigl{(}\wrlds:n,\wrlds:n(j)\bigr{)}\Bigr) \in B_k$ for all $1 \leq j \leq k$;
 \item[(atn)] $\Bigl(\bigl(\wrlds:m, s^\mathfrak{M}\bigr),
\bigl{(}\wrlds:n, s^\mathfrak{N}\bigr{)}\Bigr) \in B_k$ for all $\nom:s \in \Nom$;
  \item[(bind)] $\bigl((\wrlds:m^j_{\wrld:m}, \wrld:m),
(\wrlds:n^j_{\wrld:n}, \wrld:n)\bigr) \in B_k$ for all $1 \leq j \leq k$,\\
where $(\wrld:v_1,\ldots, \wrld:v_k)^j_{\wrld:v} = (\wrld:v_1, \ldots,
\wrld:v_{j-1}, \wrld:v, \wrld:v_{j+1},\ldots, \wrld:v_k)$.
\end{itemize}
An \emph{$\omega$-bisimulation} between $\str{M}$ and $\str{N}$ is a non-empty
family of $k$-bisimulations $(B_k)_{k\in\omega}$ satisfying the following
condition for all $k \in\omega$: \bgroup\makeatletter\@mathmargin28pt\makeatother%
\begin{equation}\tag{ext}\label{eq:ext}
\bigl((\wrlds:m, \wrld:m), (\wrlds:n, \wrld:n)\bigr) \in B_k \;\implies\;   
\bigl((\cc{\wrlds:m}{\wrld:m}, \wrld:m), (\cc{\wrlds:n}{\wrld:n}, \wrld:n)\bigr) \in B_{k+1},
\end{equation}
\egroup
where $\cc{}{}$ stands for concatenation. (Note that
technically an $\omega$-bisimulation is a relation between
$M^*$ and $N^*$.)
\end{definition}
Areces~\emph{et al.}\ call (ext) the storage rule, but we change the terminology
here as the operator $\store$ is often informally called \emph{store}, one
intuitive reading of $\store \wvar:x \phi$ being ``store $\phi$ at $x$''; we
will informally call (ext) the extensibility condition.  Their ($\jump$)-rule is split
into two clauses, (atv) and (atn) handling variables and nominals separately.
For handling the existential quantification we need the following condition:
\begin{itemize} [widest={(ex)}]
 \item[(ex)] If $(\wrlds:m,\wrld:m) \mathrel{B_k} (\wrlds:m',\wrld:m')$,
then:
\begin{itemize}[widest={(ex-b)}]
  \item[(ex-f)] for all $j\leq k$, $\wrld:n\in M$, 
  
  there exists an $\wrld:n'\in
M'$ such that $(\wrlds:m^j_{\wrld:n},\wrld:m)  \mathrel{B_k}
({\wrlds:m'}^j_{\wrld:n'},\wrld:m')$,

  \item[(ex-b)] for all $j\leq k$, $\wrld:n'\in M'$, 
  
  there exists an $\wrld:n\in
M$ such that $(\wrlds:m^j_{\wrld:n},\wrld:m)  \mathrel{B_k}
({\wrlds:m'}^j_{\wrld:n'},\wrld:m')$.
\end{itemize}
\end{itemize}

\begin{definition}[$\hyFeat$-$\omega$-bisimulation] \label{def:bisimulation}
Let $\hyFeat \subseteq \{ \jump, \store, \exists, \Nom \}$ be a set of hybrid language features.
Let $\mathsf{cond}(\hyFeat)$ be the smallest subset of the set of conditions
given above, satisfying the following requirements:
\begin{itemize}
  \item if $\Nom\in\hyFeat$ then $\text{(nom)}\in \mathsf{cond}(\hyFeat)$,
\item if $\store \in\hyFeat$ then $\text{(bind)}\in
  \mathsf{cond}(\hyFeat)$, 
    \item if $\{\store,\Nom \}\subseteq\hyFeat$ then
  $\{\text{(bind)}, \text{(nom)}\}\subseteq \mathsf{cond}(\hyFeat)$,
\item if $\jump\in\hyFeat$ then
  $\text{(atv)}\in \mathsf{cond}(\hyFeat)$, 
    \item if $\{\jump,\Nom \}\subseteq\hyFeat$ then
  $\{\text{(atv)}, \text{(atn)}, \text{(nom)}\}\subseteq \mathsf{cond}(\hyFeat)$,
\item if $\exists\in\hyFeat$ then $\text{(ex)} \in \mathsf{cond}(\hyFeat)$,
\item if $\{\exists,\Nom\}\subseteq\hyFeat$ then
  $\{\text{(ex)}, \text{(nom)}\}\subseteq \mathsf{cond}(\hyFeat)$,
\item if $\{\store, \jump\}\subseteq\hyFeat$ then
  $\{\text{(bind)}, \text{(atv)}\} \subseteq \mathsf{cond}(\hyFeat)$,
    \item if $\{\store, \jump,\Nom \}\subseteq\hyFeat$ then
  $\{\text{(bind)}, \text{(atv)}, \text{(atn)}, \text{(nom)}\}\subseteq \mathsf{cond}(\hyFeat)$,
\item if $\{\exists, \jump\}\subseteq\hyFeat$ then
  $\{\text{(ex)}, \text{(atv)}\}\subseteq \mathsf{cond}(\hyFeat)$,
    \item if $\{\exists, \jump,\Nom \}\subseteq\hyFeat$ then
$\mathsf{cond}(\hyFeat)$ contains all the conditions.  
\end{itemize}
An $\omega$-bisimulation $(B_k)_{k\in\omega}$ between two Kripke structures $\str{M}$ and $\str{N}$ in the hybrid language of $\hyFeat$ will be called an \emph{$\hyFeat$-$\omega$-bisimulation} from $\str{M}$ to $\str{N}$ if the conditions $\mathsf{cond}(\hyFeat)$ hold.
\end{definition}

Note that the possibilities above are exhaustive, as $\store$ is redundant in
presence of $\exists$. In practice we will avoid using precise names such as
$\{\store,\Nom\}$-$\omega$-bisimulation, and rely on context and
circumlocutions to clarify what kind of an $\omega$-bisimulation we need. The
remainder of this paper should be read with this principle in mind.

\begin{example}\label{ex:S-family}
Consider again the structures $\str{M}$ and $\str{N}$ of
Example~\ref{ex:overkill} and the bisimulation relation $B$ between $\str{M}$
and $\str{N}$.  For each of the pairs $(\wrld*:m_0, \wrld*:n_0)$, $(\wrld*:m_i,
\wrld*:n_i)$, and $(\wrld*:m_{i+1}, \wrld*:n_i)$ in $B$ with $i \geq 1$, writing
$\lambda$ for the empty sequence, define inductively for $k \geq 0$
\begin{align*}
S^{0, 0}_0 &= S^{i, i}_0 = S_0^{i+1, i} = \{ (\lambda, \lambda) \}\\
S^{0, 0}_{k+1} &= \{ (\cc{\wrlds:m}{\wrld*:m_0}, \cc{\wrlds:n}{\wrld*:n_0})
\mid (\wrlds:m, \wrlds:n) \in S^{0, 0}_k \} \\
\begin{split}
  S^{i, i}_{k+1} &= \{ (\cc{\wrlds:m}{\wrld*:m_0}, \cc{\wrlds:n}{\wrld*:n_0}) \mid
 (\wrlds:m, \wrlds:n) \in S^{i, i}_k \}\\
  &\quad\cup
  \{ (\cc{\wrlds:m}{\wrld*:m_j}, \cc{\wrlds:n}{\wrld*:n_j}) \mid 1 \leq j \leq
    i,\ (\wrlds:m, \wrlds:n) \in S^{i, i}_k \}
\end{split}\\
\begin{split}  
S^{i+1, i}_{k+1} &= 
  \{ (\cc{\wrlds:m}{\wrld*:m_0}, \cc{\wrlds:n}{\wrld*:n_0}) \mid (\wrlds:m, \wrlds:n) \in S^{i+1, i}_k \}\\ 
  &\quad\cup
  \{ (\cc{\wrlds:m}{\wrld*:m_{j+1}}, \cc{\wrlds:n}{\wrld*:n_j}) \mid 1 \leq j \leq
    i,\ (\wrlds:m, \wrlds:n) \in S^{i+1, i}_k \}
\end{split}   
\end{align*}
and finally put
\begin{equation*}
B_k = \{ (\cc{\wrlds:m}{\wrld*:m_i}, \cc{\wrlds:n}{\wrld*:n_j}) \mid (\wrld*:m_i, \wrld*:n_j) \in B,\ (\wrlds:m, \wrlds:n) \in S^{i, j}_k \}.
\end{equation*}
In particular, $B_0 = \{ (\cc{\lambda}{m}, \cc{\lambda}{n}) \mid (m, n) \in B
\}$.  Indeed, for each $k \geq 0$, the relation $B_k$ satisfies all the
requirements of a $k$-bismulation between $\str{M}$ and $\str{N}$, except
(at). The construction of $B_k$ is a good example of what we are aiming at, so
let us dwell on it for a while. One can view the sets $S^{i,j}_k$ as recording
$k$ pairs of states visited sequentially in a ``run'' starting at $(\wrld*:m_0,
\wrld*:n_0)$ and visiting pairs accessible from $(\wrld*:m_0, \wrld*:n_0)$. In
each run, only single pairs of accessible states are recorded: for instance, if
$(\wrld*:m_1,\wrld*:n_1)$ is visited, then $(\wrld*:m_2, \wrld*:n_1)$ is not. An
analogy to think of is a bisimulation between nondeterministic automata. Less
metaphorically, the construction is such that the sequence of recorded pairs of
states $S^{i, j}_k$ can only contain pairs of states
$(\wrld*:m_k,\wrld*:n_\ell)$ such that $k\leq i$ and $\ell\leq j$, where
$(\wrld*:m_i, \wrld*:n_j)$ is the current state. Indeed, $S^{i, j}_k$ contains
all of such pairs in order to satisfy (bind).  Crucially for (wvar), the
construction separates between runs including and excluding $\wrld*:m_1$; if
$(\wrld*:m_1,\wrld*:n_1)$ has been visited, then only the pairs $(\wrld*:m_i,
\wrld*:n_i)$ are visited; if not, then only the pairs $(\wrld*:m_{i+1},
\wrld*:n_i)$. This property is preserved by transitions in $\str{M}$ and
$\str{N}$, respectively, so (forth) and (back) hold. Finally, it is easy to
verify that the family $(B_k)_{0 \leq k}$ also satisfies (ext).
\end{example}

\begin{lemma}\label{lem:q-injective-implies-omega}
Let $\str{M}$ and $\str{N}$ be $\hysig$-structures
related by a quasi-injective bisimulation.  Then there is
an $\{ \store \}$-$\omega$-bi\-sim\-u\-la\-tion between $\str{M}$ and $\str{N}$. 
\end{lemma}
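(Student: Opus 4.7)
The plan is to define the desired $\omega$-bisimulation $(B_k)_{k \in \omega}$ directly as the closure of the given quasi-injective bisimulation $B$ under the game moves of the $\omega$-bisimulation game, and then verify the required properties by induction on this construction. Concretely, let $(B_k)_{k \in \omega}$ be the smallest family with $B_k \subseteq (M^k \times M) \times (N^k \times N)$ satisfying: (a) $\bigl(((), \wrld:m), ((), \wrld:n)\bigr) \in B_0$ for each $(\wrld:m, \wrld:n) \in B$; (b) $\bigl((\cc{\wrlds:m}{\wrld:m}, \wrld:m), (\cc{\wrlds:n}{\wrld:n}, \wrld:n)\bigr) \in B_{k+1}$ whenever $\bigl((\wrlds:m, \wrld:m), (\wrlds:n, \wrld:n)\bigr) \in B_k$; (c) $B_k$ is closed under (forth), (back), and (bind), admitting in (forth) and (back) \emph{every} $\wrld:n' \in R^{\str{N}}(\wrld:n)$ (respectively $\wrld:m' \in R^{\str{M}}(\wrld:m)$) with $(\wrld:m', \wrld:n') \in B$. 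Existence of at least one such witness is supplied by $B$ itself, so (ext), (forth), (back) and (bind) hold by construction, and since $\mathsf{cond}(\{\store\}) = \{\text{(bind)}\}$, only (prop) and (wvar) remain to be verified.

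For (prop), a routine induction on the closure shows $(\wrld:m, \wrld:n) \in B$ and $(\wrlds:m(j), \wrlds:n(j)) \in B$ for every pair in $B_k$ and every $j$; (prop) then follows because $B$ itself satisfies (prop). The crux is (wvar), for which I would establish the following \emph{reachability invariant}:
\begin{equation*}
\wrld:m \in R^*(\wrlds:m(j)) \text{ and } \wrld:n \in R^*(\wrlds:n(j)) \quad \text{for every } 1 \leq j \leq k.
\end{equation*}
This holds vacuously at the base; it is preserved by (ext) because the new history entry equals the current element, by (forth) and (back) because the new current is a successor of the old (so it is still reachable from any history element by transitivity of $R^*$), and by (bind) because the overwritten position is set to the current element. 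Now suppose $\wrlds:m(j) = \wrld:m$ for some $j$. Then $(\wrld:m, \wrlds:n(j))$ and $(\wrld:m, \wrld:n)$ both lie in $B$, so quasi-injectivity forces either $\wrlds:n(j) = \wrld:n$ or $\wrld:n \notin R^*(\wrlds:n(j))$ and $\wrlds:n(j) \notin R^*(\wrld:n)$; the second option contradicts $\wrld:n \in R^*(\wrlds:n(j))$ from the invariant, so $\wrlds:n(j) = \wrld:n$. The reverse implication is symmetric, completing (wvar).

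The hard part is to recognise that reachability is the right invariant to carry. A naive attempt that defines $B_k$ by combining $B$-relatedness of corresponding entries with a bare ``equality type'' condition visibly stumbles on (forth): the bisimulation $B$ only guarantees \emph{some} successor $\wrld:n^*$ of $\wrld:n$ with $(\wrld:m', \wrld:n^*) \in B$, and there is no direct reason why $\wrld:n^*$ should coincide with the image $\wrlds:n(j)$ that (wvar) would demand when $\wrld:m' = \wrlds:m(j)$. The reachability invariant is precisely what bridges this gap: once we know $\wrld:n^*$ is $R^*$-reachable from $\wrlds:n(j)$, quasi-injectivity applied to the two $B$-partners of the same $\str{M}$-point forces them to be equal, and no ad hoc case analysis is needed.
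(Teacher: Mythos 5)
Your proposal is correct and is essentially the paper's own argument in a "generated-from-below" guise: the paper simply defines $B_k$ outright as the set of all pairs $\bigl((\wrlds:m,\wrld:m),(\wrlds:n,\wrld:n)\bigr)$ with $(\wrld:m,\wrld:n)\in B$, $(\wrlds:m(j),\wrlds:n(j))\in B$, and $\wrld:m\in (R^{\str{M}})^*(\wrlds:m(j))$, $\wrld:n\in (R^{\str{N}})^*(\wrlds:n(j))$ for all $j$ — precisely your reachability invariant — and then derives (wvar) from quasi-injectivity exactly as you do. Your inductive closure plus invariant-preservation argument carries out the same checks the paper labels "immediate", so the two proofs coincide in substance.
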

\begin{proof}
Let $B$ be a quasi-injective bisimulation between $\str{M}$ and $\str{N}$.
Define $B_k \subseteq (M^k \times M) \times (N^k \times N)$ for all $k \geq 0$
as follows
\begin{gather*}
B_k = \Bigl\{
\bigl((\wrlds:m, \wrld:m), (\wrlds:n,\wrld:n)\bigr) \mid (\wrld:m, \wrld:n) \in B \wedge{}\\
\hspace*{6em}\forall j\in\{1,\dots,k\} : (\wrlds:m(j), \wrlds:n(j)) \in B \wedge{}\\
\hspace*{10em}\wrld:m \in (R^\str{M})^*(\wrlds:m(j)) \wedge \wrld:n \in (R^\str{N})^*(\wrlds:n(j))
\Bigr\}
\end{gather*}
Properties (back), (forth), (bind) and (ext) are satisfied immediately.
For~(wvar), let $\bigl((\wrlds:m, \wrld:m), (\wrlds:n, \wrld:n)\bigr)\in B_k$.
First assume $\wrlds:m(j) = \wrld:m$ for some $1 \leq j \leq k$.  Then
$(\wrlds:m(j),\allowbreak \wrlds:n(j)) \in B$, $(\wrld:m,\allowbreak \wrld:n)
\in B$, and $\wrld:n\in (R^\str{N})^*(\wrlds:n(j))$ by definition of $B_k$.  But
then $\wrlds:n(j) = \wrld:n$ as otherwise $B$ would not be quasi-injective.  For
the other direction just interchange $\wrld:n$ and $\wrld:m$.
\end{proof}

As demonstrated by Example~\ref{ex:overkill}, the converse of Lemma~\ref{lem:q-injective-implies-omega} does not hold. Indeed,
the next example shows that quasi-injective bisimulations tend to preserve
non-first-order properties such as having uncountable cardinality. It would 
be interesting to find out the precise strength of quasi-injective
bisimulations, but it is beyond the scope of the present article, and may prove
elusive.  

\begin{example}\label{ex:cardinality}
Let $\str{R} = (\mathbb{R}, {\leq})$ and $\str{Q} = (\mathbb{Q}, {\leq})$ be the
usual reals and rationals with their usual orders. The player $\exists$, say,
Elo\"ise, has a winning strategy in the Ehrenfeucht-Fra\"iss\'e game
$\mathrm{EF}_\omega(\str{R},\str{Q})$. Let the relation $B_k\subseteq
(\mathbb{R}^k\times \mathbb{R})\times (\mathbb{Q}^k\times\mathbb{Q})$ be given
by putting $(\wrlds:r,\wrld:r) B_k (\wrlds:q,\wrld:q)$ if and only if (i)
$(\wrlds:r,\wrlds:q)$ is the pair of sequences arising from the first $k$ rounds
in some play of $\mathrm{EF}_\omega(\str{R},\str{Q})$ in which Elo\"ise follows
her winning strategy, and (ii) $(\wrld:r,\wrld:q)$ is the pair consisting of
Abelard's move and Elo\"ise's response in the $k+1$ round.  It is not difficult
to see that the family $(B_k)_{k\in\omega}$ is an $\omega$-bisimulation between
$\str{R}$ and $\str{Q}$. However, there is no non-empty quasi-injective
bisimulation between $\str{R}$ and $\str{Q}$. For suppose there is one, say $S$,
and suppose $r \mathrel{S} q$. Then, by quasi-injectivity, $S$ must be a
bijective map between $\{u\in\mathbb{R}: \wrld:r\leq u\}$ and $\{w\in\mathbb{Q}:
\wrld:q\leq w\}$. This is clearly impossible because of the cardinalities of
these sets.
\end{example}

\section{Characterisation theorems}

Our proof approach of bisimulation invariance characterisations for hybrid logic
is modelled after Badia~\cite{Bad16}, where such a characterisation was given for
bi-intuitionistic logic. That in turn was motivated by the proof of the
celebrated Lindstr\"om characterisation of first-order logic in~\cite{Lin69}.
The main technicality in our approach is a finite approximation of the notion of 
$k$-bisimulation in the sense of Areces~\emph{et al.}~\cite{ABM01}.
An illustration is the family $S^{i,j}_k$ of relations,
given in Example~\ref{ex:S-family}. In general, this allows us to manoeuvre
around first-order undefinability of $\omega$-bisimulations.

\begin{definition}[Basic $(k,\ell)$-bisimulation]\label{def:base-bi} Let 
$\str{M} = (M, R, V)$ and $\str{M}' = (M',\allowbreak R',\allowbreak V')$ be models of the
purely modal language.  A system of relations $(Z^k_{i})_{i \leq \ell}$ where
$Z_i^k \subseteq (M^k \times M) \times ({M'}^k \times M')$ will be called a
\emph{basic $(k,\ell)$-bisimulation} from $\str{M}$ to $\str{M}'$ if the
following holds:
\begin{itemize}[widest={(forth)}]
  \item[(prop)] if $(\wrlds:m, \wrld:m) \mathrel{Z^k_{i}} (\wrlds:m',
\wrld:m')$, then $m \in V(\prop:p)$ iff $m'\in V'(\prop:p)$ for all $\prop:p \in
\Prop$,

  \item[(wvar)] if $(\wrlds:m, \wrld:m) \mathrel{Z_i^k} (\wrlds:m', \wrld:m')$,
then $\wrlds:m(j)= \wrld:m$ iff $\wrlds:m'(j) = \wrld:m'$ for all $j\leq k$,
  
  \item[(forth)] if $(\wrlds:m, \wrld:m) \mathrel{Z_{i-1}^k} (\wrlds:m',
\wrld:m')$ and $\wrld:m \mathrel{R} \wrld:n$, then there is an $\wrld:n'$ with
$\wrld:m' \mathrel{R'} \wrld:n'$ and $(\wrlds:m, \wrld:n) \mathrel{Z_i^k}
(\wrlds:m',\wrld:n')$,

  \item[(back)] if $(\wrlds:m,\wrld:m) \mathrel{Z_{i-1}^k} (\wrlds:m',\wrld:m')$
and $\wrld:m' \mathrel{R'} \wrld:n'$, then there is an $\wrld:n$ with $\wrld:m
\mathrel{R} \wrld:n$ and $(\wrlds:m, \wrld:n) \mathrel{Z_i^k}
(\wrlds:m',\wrld:n')$.
\end{itemize}
For pointed models $(\str{M},\wrld:m)$ and $(\str{M}',\wrld:m')$, we add the
condition $\wrld:m \mathrel{Z_0^0} \wrld:m'$.
\end{definition}

\begin{definition}[Extended $(k,\ell)$-bisimulation]\label{ext-bi}
Let $\hysig$ be a hybrid signature and let $\str{M}= (M, R, \Nom, V)$ and
$\str{M}'= (M', R', \Nom, V')$ be Kripke structures over $\hysig$.  Let
$(Z^k_{i})_{i \leq \ell}$ be a basic $(k,\ell)$-bisimulation from $\str{M}$ to
$\str{M}'$.  Consider the conditions below.
\begin{itemize}[widest={(bind)}]
  \item[(nom)] If $(\wrlds:m, \wrld:m) \mathrel{Z^k_{i}} (\wrlds:m', \wrld:m')$,
then $\wrld:m = \nom:s^\str{M}$ iff $\wrld:m' = \nom:s^\str{N}$ for every
$\nom:s \in \Nom$.

  \item[(bind)] If $(\wrlds:m, \wrld:m) \mathrel{Z_{i-1}^k}(\wrlds:m',
\wrld:m')$, then for all $j\leq k$, $(\wrlds:m^j_m, \wrld:m) \mathrel{Z_i^k}
({\wrlds:m'}^j_{\wrld:m'}, \wrld:m')$.

  \item[(atv)] $\bigl(\wrlds:m, \wrlds:m(j)\bigr) \mathrel{Z_i^k}
\bigl(\wrlds:m',\wrlds:m'(j)\bigr)$ for all $1 \leq j \leq k$.

  \item[(atn)] $(\wrlds:m,\nom:s^\str{M}) \mathrel{Z^k_{i}}
(\wrlds:m',\nom:s^{\str{M}'})$ for all $\nom:s \in \Nom$.

  \item[(ex)] If $(\wrlds:m,\wrld:m) \mathrel{Z_{i-1}^k} (\wrlds:m',\wrld:m')$,
then:
\begin{itemize}[widest={(ext-b)}]
  \item[(ex-f)] for all $j\leq k$, $\wrld:n\in M$, there exists an $\wrld:n'\in
M'$ such that $(\wrlds:m^j_{\wrld:n},\wrld:m) \mathrel{Z_i^k}
({\wrlds:m'}^j_{\wrld:n'},\wrld:m')$,
  \item[(ex-b)] for all $j\leq k$, $\wrld:n'\in M'$, there exists an $\wrld:n\in
M$ such that $(\wrlds:m^j_{\wrld:n},\wrld:m) \mathrel{Z_i^k}
({\wrlds:m'}^j_{\wrld:n'},\wrld:m')$.
\end{itemize}
\end{itemize}
For a set $\hyFeat$ of hybrid language features, the basic
$(k,\ell)$-bisimulation $(Z^k_{i})_{i \leq \ell}$ will be called an
\emph{$\hyFeat$-$(k,\ell)$-bisimulation} from $\str{M}$ to $\str{M}'$ if the
conditions $\mathsf{cond}(\hyFeat)$ from Definition~\ref{def:bisimulation} hold.
For pointed models $(\str{M},\wrld:m)$ and $(\str{M}',\wrld:m')$, we add the condition
$\wrld:m \mathrel{Z_0^0} \wrld:m'$, as before. 
\end{definition}

Note that $\emptyset$-$(0,\ell)$-bisimulation is the bisimulation for the purely modal
language expanded by world variables; removing world variables gives precisely
the original bisimulation from modal logic.  At the other extreme, existence of
$\{ \jump, \store, \exists \}$-$(k,\ell)$-bi\-sim\-u\-la\-tions for every $k\in\omega$ and $\ell\in\omega$
is strictly stronger than elementary equivalence of $\str{M}$ and $\str{M}'$; we
will come back to that in §\ref{sec:final}.  

\begin{lemma}\label{karp1}
Consider a hybrid language defined over a finite signature with features from $\hyFeat$ and let $\str{M}$ and $\str{M}'$ be models for
  this hybrid language.  Let $\ell,k$ be two natural numbers.
Then the following are equivalent:
\begin{enumerate}[label={(\roman*)}, leftmargin=*, widest={ii}]
  \item For each hybrid formula $\phi$ of degree at most $\ell$ with world
variables from a finite set $\{\wvar:x_1, \dots, \wvar:x_k\}$, we have $\str{M},
\wrlds:m, \wrld:m \hymodels \phi$ iff $\str{M}', \wrlds:m', \wrld:m' \hymodels \phi$.

  \item There exists an $\hyFeat$-$(k,\ell)$-bisimulation
$(Z^k_{i})_{\substack{i \leq \ell}}$ from $\str{M}$ to $\str{M}'$ such that
\begin{enumerate} 
\item $(\wrlds:m,\allowbreak \wrld:m) \mathrel{Z^k_{0}} (\wrlds:m',\allowbreak
\wrld:m')$, and
\item ${Z^k_{0}} \subseteq \dots \subseteq Z^k_{i} \subseteq \dots \subseteq Z^k_{\ell}$.
\end{enumerate}
\end{enumerate}
\end{lemma}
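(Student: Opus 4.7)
The plan is an Ehrenfeucht--Fra\"iss\'e-style argument adapted to the hybrid setting. Reading the index $i$ on $Z^k_i$ as a counter for the formula degree \emph{remaining}, I will take $Z^k_i$ (in direction (i) $\Rightarrow$ (ii)) to be the relation of indistinguishability by hybrid formulas of degree at most $\ell - i$ with free world variables in $\{\wvar:x_1, \dots, \wvar:x_k\}$ drawn from the fragment fixed by $\hyFeat$. Under this reading the chain $Z^k_0 \subseteq \cdots \subseteq Z^k_\ell$ is immediate and $\wrld:m \mathrel{Z^k_0} \wrld:m'$ is just hypothesis~(i).

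For (ii) $\Rightarrow$ (i), I would induct on formula structure, proving that if $((\wrlds:m, \wrld:m), (\wrlds:m', \wrld:m')) \in Z^k_i$ and $\dg(\phi) \leq \ell - i$, then $\str{M}, \wrlds:m, \wrld:m \hymodels \phi$ iff $\str{M}', \wrlds:m', \wrld:m' \hymodels \phi$. Atomic cases use (prop), (wvar), and (when $\Nom \in \hyFeat$) (nom); Boolean cases are immediate; for $\Diamond\phi$ the index steps from $i-1$ to $i$ via (forth)/(back); for $\store \wvar:x_j \phi$ and $\exists \wvar:x_j \phi$ the analogous index step uses (bind) and (ex) respectively; and for $\jump_{\plc:w}\phi$, whose degree equals $\dg(\phi)$, no index step is required and one invokes (atv) or (atn) to shift the current-world component inside the same $Z^k_i$. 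The clauses of $\mathsf{cond}(\hyFeat)$ are exactly those activated by the operators actually present, so the induction closes modularly.

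For (i) $\Rightarrow$ (ii), with $Z^k_i$ defined as above, one verifies the clauses of $\mathsf{cond}(\hyFeat)$ by exhibiting, for each required witness on one side, a formula forcing a matching witness on the other. For (forth), given $(\wrlds:m, \wrld:m) \mathrel{Z^k_{i-1}} (\wrlds:m', \wrld:m')$ and $\wrld:m R \wrld:n$, let $\chi_{\wrld:n}$ be a single formula of degree at most $\ell - i$ capturing the full degree-$(\ell-i)$ theory of $(\wrlds:m, \wrld:n)$; then $\Diamond \chi_{\wrld:n}$ has degree at most $\ell - (i-1)$ and holds at $(\wrlds:m, \wrld:m)$, hence at $(\wrlds:m', \wrld:m')$, yielding the required $\wrld:n'$ with $(\wrlds:m, \wrld:n) \mathrel{Z^k_i} (\wrlds:m', \wrld:n')$. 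The clauses (back), (bind), (ex), (atv), (atn), and (nom) are handled analogously by embedding an inner formula of degree at most $\ell - i$ inside $\store \wvar:x_j$, $\exists \wvar:x_j$, or $\jump_{\plc:w}$ (respectively an atomic $\nom:s$) to obtain an outer formula satisfied at the source pair.

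The main obstacle is the construction of $\chi_{\wrld:n}$ as a genuine single formula rather than an infinite conjunction. This rests on the hypothesis that the signature $\hysig$ is finite: together with bounded degree and a bounded set of free world variables, the formulas of the fragment determined by $\hyFeat$ are finite up to logical equivalence, so one can define a finite Hintikka-style conjunction of all degree-$(\ell-i)$ formulas true at $(\wrlds:m, \wrld:n)$ together with the negations of those that fail there. A secondary delicate point, which matches the organisation of Definition~\ref{def:bisimulation}, is to verify that each clause of $\mathsf{cond}(\hyFeat)$ is indeed checked using only formulas belonging to the fragment of $\hyFeat$, so that the correspondence between the syntactic fragment and the bisimulation conditions is exact.
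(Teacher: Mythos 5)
Your proposal is correct and follows essentially the same route as the paper: defining $Z^k_i$ as equality of degree-$(\ell-i)$ hybrid types over the variables $\wvar:x_1,\dots,\wvar:x_k$, verifying each condition in $\mathsf{cond}(\hyFeat)$ via finite Hintikka-style characteristic formulas (which is where the finite signature is used), and proving (ii)$\Rightarrow$(i) by induction on formula structure with the index stepping exactly for $\Diamond$, $\store$, $\exists$ and staying fixed for $\jump$. The only cosmetic difference is that your induction hypothesis is stated for formulas of degree \emph{at most} $\ell-i$, which dispenses with the paper's final tautology-conjunction step for lowering the degree.
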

\begin{proof}
For the implication from (ii) to (i), we first show that the following holds:
\begin{itemize}[widest={($\ast$)}]
  \item[($\ast$)] if $(\wrlds:m, \wrld:m) \mathrel{Z^k_{{\ell}-i}} (\wrlds:m', \wrld:m')$
then $\str{M}, \wrlds:m, \wrld:m \hymodels \phi$ iff $\str{M}',
\wrlds:m', \wrld:m' \hymodels \phi$\\
for all formulas $\phi(\wvar:x_1, \dots, \wvar:x_k)$ of degree $i \leq \ell$.
\end{itemize}
We proceed by induction on complexity of $\phi$. 

\newcommand*{\justequiv}[1][]{\quad\llap{${}\stackrel{\text{#1}}{\iff}{}$}{}}
\newcommand*{\justimpl}[1][\smash{\phantom{abcdef}}]{\quad\llap{${}\stackrel{\text{#1}}{\;\Longrightarrow\;}{}$}{}}

\smallskip\noindent%
\emph{Case $\prop:p \in \Prop$}: Since $\prop:p$ is of degree $0$, we assume $(\wrlds:m,
\wrld:m) \mathrel{Z^k_{\ell}} (\wrlds:m', \wrld:m')$. Then we have
\begin{align*}&
\str{M}, \wrlds:m, \wrld:m \hymodels \prop:p
\\\justequiv&
\wrld:m \in V(\prop:p)
\\\justequiv[(prop)]&
\wrld:m' \in V'(\prop:p)
\\\justequiv&
\str{M}', \wrlds:m', \wrld:m' \hymodels \prop:p
\end{align*}

\noindent%
\emph{Case $\nom:s \in \Nom$}: As previously, assume
$(\wrlds:m, \wrld:m) \mathrel{Z^k_{\ell}} (\wrlds:m', \wrld:m')$; then
\begin{align*}&
\str{M}, \wrlds:m, \wrld:m \hymodels \nom:s
\\\justequiv&
\wrld:m = \nom:s^\str{M}
\\\justequiv[(nom)]&
\wrld:m' = \nom:s^{\str{M}'}
\\\justequiv&
\str{M}', \wrlds:m', \wrld:m' \hymodels \nom:s
\end{align*}

\noindent%
\emph{Case $\wvar:x_e$}: Similarly, using (wvar) we obtain
\begin{align*}&
\str{M}, \wrlds:m,m \hymodels \wvar:x_e
\\\justequiv&
\wrlds:m(e)=\wrld:m
\\\justequiv&
\wrlds:m'(e) =\wrld:m'
\\\justequiv&
\str{M'}, \wrlds:m',\wrld:m' \hymodels \wvar:x_e
%\text{ for all  } i\leq k
\end{align*}

\noindent%
The \emph{cases} of $\neg$, $\wedge$, and $\vee$ are straightforward consequences of
the induction hypothesis.

\smallskip\noindent%
\emph{Case $\phi = \Diamond\psi$}: We have $\dg(\phi)=i\leq\ell$ and $\dg(\psi)
= i-1$.  Assume that $(\wrlds:m,\wrld:m) \mathrel{Z^k_{\ell-i}}
(\wrlds:m',\wrld:m')$.
Hence
\begin{align*}&
\str{M}, \wrlds:m, \wrld:m \hymodels \Diamond \psi
\\\justequiv&
\str{M}, \wrlds:m, \wrld:n \hymodels \psi
\text{ for some } \wrld:n \text{ with } \wrld:m \mathrel{R^\str{M}} \wrld:n
\\\justimpl[I.~H.]&
\str{M'}, \wrlds:m', \wrld:n' \hymodels \psi
\text{ for some } \wrld:n' \text{ with } \wrld:m' \mathrel{R^{\str{M}'}} \wrld:n' 
\\\justequiv&
\str{M'}, \wrlds:m', \wrld:m' \hymodels \Diamond\psi
\end{align*}
where the induction hypothesis is applicable since
$(\wrlds:m,\wrld:m) \mathrel{Z^k_{\ell-i}} (\wrlds:m', \wrld:m')$ implies 
$(\wrlds:m,\wrld:n) \mathrel{Z^k_{\ell-(i-1)}} (\wrlds:m', \wrld:n')$ by (forth); 
the backward direction follows similarly using (back).

\smallskip\noindent%
\emph{Case $\phi = \store{x}_e \psi$}: Again, 
$\dg(\phi)=i\leq\ell$ and $\dg(\psi) = i-1$.
\begin{align*}&
\str{M}, \wrlds:m, \wrld:m \hymodels \store{x}_e \psi
\\\justequiv&
\str{M}, \wrlds:m^e_{\wrld:m}, \wrld:m \hymodels \psi
\\\justequiv[I.~H.]& 
\str{M'}, {\wrlds:m'}^e_{\wrld:m'}, \wrld:m' \hymodels \psi
\\ \justequiv&
\str{M'}, \wrlds:m', \wrld:m' \hymodels  \store{x}_e \psi
\end{align*}
where I.~H.\ is applicable since by (bind), we have that $(\wrlds:m,\wrld:m) \mathrel{Z^k_{{\ell-i}}}
(\wrlds:m', \wrld:m')$ implies $(\wrlds:m,\wrlds:m(e)) \mathrel{Z^k_{{\ell-(i-1)}}}
(\wrlds:m', \wrlds:m'(e))$.

\smallskip\noindent%
\emph{Case $\phi = \jump_{\wvar:x_e}\psi$}: 
\begin{align*}&
\str{M}, \wrlds:m, \wrld:m \hymodels \jump_{\wvar:x_e}\psi
\\\justequiv&
\str{M}, \wrlds:m, \wrlds:m(e) \hymodels  \psi
\\\justequiv[I.~H.]&
\str{M'}, \wrlds:m',\wrlds:m'(e) \hymodels  \psi
\\\justequiv&
\str{M'}, \wrlds:m', \wrld:m' \hymodels \jump_{\wvar:x_e}\psi
\end{align*}
where I.~H.\ is applicable since 
$(\wrlds:m,\wrld:m) \mathrel{Z^k_{{\ell-i}}} (\wrlds:m', \wrld:m')$ implies 
$(\wrlds:m,\wrlds:m(e)) \mathrel{Z^k_{{\ell-i}}} (\wrlds:m', \wrlds:m'(e))$ by (atv).

\smallskip\noindent%
\emph{Case $\phi = \jump_{\nom:s}\psi$}: 
\begin{align*}&
\str{M}, \wrlds:m, \wrld:m \hymodels  \jump_{\nom:s}\psi
\\\justequiv&
\str{M}, \wrlds:m, \nom:s^{\str{M}} \hymodels \psi
\\\justequiv[I.~H.]&
\str{M'}, \wrlds:m', \nom:s^{\str{M}'} \hymodels \psi
\\\justequiv&
\str{M'}, \wrlds:m', \wrld:m' \hymodels \jump_{\nom:s} \psi
\end{align*}
where I.~H.\ is applicable since 
$(\wrlds:m,\wrld:m) \mathrel{Z^k_{{\ell-i}}} (\wrlds:m', \wrld:m')$ implies 
$(\wrlds:m,s^\str{M}) \mathrel{Z^k_{{\ell-i}}} (\wrlds:m', s^\str{M})$ by (atn).

\smallskip\noindent%
\emph{Case $\phi = \exists \wvar:x_e\psi$}: 
We have $\dg(\phi)=i\leq\ell$ and $\dg(\psi) = i-1$.
\begin{align*}&
\str{M}, \wrlds:m, \wrld:m \hymodels \exists \wvar:x_e \psi
\\\justequiv&
\str{M}, \wrlds:m^e_{\wrld:n}, \wrld:m \hymodels \psi
\text{ for some } \wrld:n \in M
\\\justimpl[I.~H.]&
\str{M'}, {\wrlds:m'}^e_{\wrld:n'}, \wrld:m' \hymodels \psi
\text{ for some } \wrld:n' \in M'
\\\justequiv&
\str{M'}, \wrlds:m', \wrld:m' \hymodels \exists \wvar:x_e \psi
\end{align*}
where I.~H.\ is applicable since by (ext-f) we have that
$(\wrlds:m,\wrld:m) \mathrel{Z^k_{\ell-i}} (\wrlds:m', \wrld:m')$ implies 
$(\wrlds:m^e_{\wrld:n},\wrld:m) \mathrel{Z_{{\ell-(i-1)}}^k} ({\wrlds:m'}^e_{\wrld:n'},\wrld:m')$; the
converse direction follows similarly using (ext-b). 
This ends the proof of statement ($\ast$).

By statement ($\ast$), $(\wrlds:m, \wrld:m) \mathrel{Z^k_{{0}}} (\wrlds:m',
\wrld:m')$, implies that $\str{M}, \wrlds:m,\wrld:m \hymodels \phi$ iff $\str{M}',
\wrlds:m',\wrld:m' \hymodels \phi$ for all formulas $\phi[x_1,\dots,x_k]$ of degree
$\ell$.  To obtain the same for smaller degrees, given a formula
$\psi[x_1,\dots,x_k]$ of degree $i\leq\ell$ choose a tautology
$\phi[x_1,\dots,x_k]$ of degree $\ell$; then we have $\dg(\psi\wedge
\phi)=\ell$, and by statement ($\ast$) we get ($\str{M}, \wrlds:m,\wrld:m
\hymodels \psi\wedge \phi$ iff $\str{M}', \wrlds:m',\wrld:m' \hymodels \psi \wedge
\phi$); since $\phi$ is a tautology, we get ($\str{M}, \wrlds:m,\wrld:m \hymodels
\psi$ iff $\str{M}', \wrlds:m',\wrld:m' \hymodels \psi$), as required.  This
completes the proof of the implication from (ii) to (i).

\smallskip

For the implication from (i) to (ii), we show that if $\str{M}, \wrlds:m,\wrld:m
\hymodels \phi$ iff $\str{M}', \wrlds:m', \wrld:m' \hymodels \phi$ for all formulas
$\phi(x_1,\dots,x_k)$ of degree $i$, then $(\wrlds:m,\wrld:m) \mathrel{Z^k_{\ell-i}}
(\wrlds:m',\wrld:m')$.

Take a model $\str{N}$ and a tuple of points
$(\wrlds:n, \wrld:n)$ of length $k$ from $\str{N}$. Let $\htp{j}{N}(\wrlds:n,
\wrld:n)$ be the set of hybrid formulas of degree $j$ that $(\wrlds:n, \wrld:n)$
satisfies. Clearly, $\htp{j}{N}(\wrlds:n, \wrld:n)$ depends on the hybrid
language features we are considering, so we will start with the basic, empty set of
features, and then we deal with the strengthenings.  Note that by closure on
Boolean negation (which holds in any hybrid language), for every hybrid formula
$\phi$ of degree at most $j$, we have either $\phi \in
\htp{j}{N}(\wrlds:n, \wrld:n)$ or $\neg\phi \in \htp{j}{N}(\wrlds:n, \wrld:n)$.

We define $(Z^k_{i})_{\substack{i \leq \ell}}$ by putting 
\begin{equation*}
(\wrlds:n,\wrld:n) \mathrel{Z^k_i} (\wrlds:n',\wrld:n') \text{ if, and only if, }
\htp{\ell-i}{M}(\wrlds:n, \wrld:n) = \htp{\ell-i}{M'}(\wrlds:n', \wrld:n')
\ \text{.} 
\end{equation*}
Then $(\wrlds:m, \wrld:m) \mathrel{Z^k_{0}} (\wrlds:m', m')$ is obvious, and by
definition, we have $Z^k_{i-1} \subseteq Z^k_{i}$ for all $0<i\leq \ell$.

\smallskip\noindent%
\emph{Condition (prop)}:
Since $\prop:p \in \Prop$ is of degree $0$ we have $(\wrlds:m,\wrld:m)
\mathrel{Z^k_0} (\wrlds:m',\wrld:m')$, which implies $\wrld:m \in V(\prop:p)
\iff \wrld:m' \in 
V'(\prop:p)$ for all $\prop:p \in \Prop$. Hence (prop) holds.  

\smallskip\noindent%
\emph{Condition (wvar)}:
If $(\wrlds:m,\wrld:m)
\mathrel{Z_i^k} (\wrlds:m',\wrld:m')$, then for any $j\leq k$ we have
\begin{align*}
\wrlds:m(j)=\wrld:m
&\iff
\str{M}, \wrlds:m,\wrld:m \hymodels x_j
\\&\iff
\str{M'}, \wrlds:m',\wrld:m' \hymodels x_j
\iff
\wrlds:m'(j) =\wrld:m',
\end{align*}
showing that (wvar) holds. 

\smallskip\noindent%
\emph{Condition (forth)}: 
Assume that $(\wrlds:m,\wrld:m) \mathrel{Z^k_i} (\wrlds:m',\wrld:m')$ and
let $\wrld:n\in R^\str{M}(\wrld:m)$.
As the set $\htp{{\ell-(i+1)}}{\str{M}}(\wrlds:m, \wrld:n)$ contains finitely many
formulas up to logical equivalence, we can write a formula logically equivalent
to $\bigwedge\htp{{\ell-(i+1)}}{\str{M}}(\wrlds:m, \wrld:n)$.
We have that $\Diamond \bigwedge\htp{{\ell-(i+1)}}{\str{M}}(\wrlds:m,\wrld:n)\in
\htp{{\ell-i}}{\str{M}}(\wrlds:m,\wrld:m) = \htp{{\ell-i}}{\str{M}'}(\wrlds:m',\wrld:m')$,
which implies that
$(\str{M}',\wrlds:m',\wrld:n')\hymodels
\bigwedge\htp{{\ell-(i+1)}}{\str{M}}(\wrlds:m,\wrld:n)$ for some element ${\wrld:n'} \in R^\str{M'}{(m')}$. 
Therefore, $\htp{{\ell-(i+1)}}{\str{M}}(\wrlds:m,\wrld:n) \subseteq
\htp{{\ell-(i+1)}}{\str{M'}}(\wrlds:m',\wrld:n')$. 

Suppose towards a contradiction that $\phi\in
\htp{{\ell-(i+1)}}{\str{M'}}(\wrlds:m',\wrld:n')\setminus
\htp{{\ell-(i+1)}}{\str{M}}(\wrlds:m,\wrld:n)$. 
We get $(\str{M}',\wrlds:m',\wrld:n')\hymodels \phi$ and $\neg \phi \in \htp{{\ell-(i+1)}}{\str{M}}(\wrlds:m,\wrld:n)$. 
Since  $(\str{M}',\wrlds:m',\wrld:n')\hymodels \bigwedge\htp{{\ell-(i+1)}}{\str{M}}(\wrlds:m,\wrld:n)$, 
we have that $(\str{M}',\wrlds:m',\wrld:n')\hymodels \neg\phi$, which is a contradiction.
Hence, $\htp{{\ell-(i+1)}}{\str{M}}(\wrlds:m,\wrld:n) = \htp{{\ell-(i+1)}}{\str{M'}}(\wrlds:m',\wrld:n')$, 
which means $(\wrlds:m,\wrld:n) \mathrel{Z^k_{{i+1}}} (\wrlds:m',\wrld:n')$.

This finishes the proof of the lemma for basic bisimulations. Now we consider
the extensions, one by one.

\smallskip\noindent%
\emph{Condition (nom)}: We have
$
\str{M},\wrlds:m,\wrld:m\hymodels \nom:s \iff \str{M}',\wrlds:m',\wrld:m'\hymodels \nom:s
$
by assumption, and so  
$
\wrld:m = \nom:s^\str{M} \iff \wrld:m' = \nom:s^{\str{M}'}
$
by definition of satisfaction.

\smallskip\noindent%
\emph{Condition (bind)}: Assume that $(\wrlds:m,\wrld:m) \mathrel{Z^k_i}
(\wrlds:m',\wrld:m')$.  Let $j\leq k$ and furthermore $\phi\in
\htp{{\ell-(i+1)}}{\str{M}}(\wrlds:m^j_m, m)$.
By definition, $\str{M}, \wrlds:m^j_{m}, m \hymodels \phi$, 
which is equivalent to $\str{M}, {\wrlds:m}, m \hymodels \store{x_j}\phi$. 
It follows that $\store{x_j}\phi \in \htp{{\ell-i}}{\str{M}}(\wrlds:m,\wrld:m) = \htp{{\ell-i}}{\str{M}'}(\wrlds:m',\wrld:m')$.  
By definition, we obtain $\str{M'},
\wrlds:m',\wrld:m' \hymodels \store{x_j}\phi$, which is equivalent to $\str{M'},
{\wrlds:m'}^j_{m'},m' \hymodels \phi$.  
We get $\phi\in \htp{{\ell-(i+1)}}{\str{M'}}({\wrlds:m'}^j_{m'},m')$.  
The converse inclusion is proved similarly.  
Hence, $\htp{{\ell-(i+1)}}{\str{M}}(\wrlds:m^j_m, m)=\htp{{\ell-(i+1)}}{\str{M'}}({\wrlds:m'}^j_{m'},m')$, 
which means $(\wrlds:m^j_m, m) \mathrel{Z^k_{i+1}} ({\wrlds:m'}^j_{m'},m')$.

\smallskip\noindent%
\emph{Condition (ex)}: This case is similar to (forth).

\smallskip\noindent%
\emph{Condition (atv)}: 
Again, suppose $(\wrlds:m,\wrld:m) \mathrel{Z^k_i} (\wrlds:m',\wrld:m')$;
recall that by definition this means $\htp{{\ell-i}}{M}(\wrlds:m, \wrld:m) = \htp{{\ell-i}}{M'}(\wrlds:m', \wrld:m')$.
Take $j\leq k$ and let $\phi\in\htp{{\ell-i}}{M}(\wrlds:m, \wrlds:m(j))$.
Then, we obtain
\begin{align*}
\phi\in\htp{{\ell-i}}{M}(\wrlds:m, \wrlds:m(j))
&\iff \str{M},\wrlds:m, \wrlds:m(j)\hymodels \phi\\
&\iff \str{M},\wrlds:m, m\hymodels \jump_{\wvar:x_j}\phi\\
&\iff \jump_{\wvar:x_j}\phi\in\htp{{\ell-i}}{M}(\wrlds:m, \wrld:m)\\
&\iff \jump_{\wvar:x_j}\phi\in\htp{{\ell-i}}{M'}(\wrlds:m', \wrld:m')\\ 
&\iff \phi\in\htp{{\ell-i}}{M'}(\wrlds:m', \wrlds:m'(j))
\end{align*}
showing that $\htp{{\ell-i}}{M}(\wrlds:m, \wrlds:m(j)) = \htp{{\ell-i}}{M'}(\wrlds:m', \wrlds:m'(j))$. 
By definition, we obtain $(\wrlds:m, \wrlds:m(j)) \mathrel{Z^k_i} (\wrlds:m', \wrlds:m'(j))$,  which proves (atv). 
The argument for (atn) is analogous.
\end{proof}

As we mentioned already, a $k$-bisimulation can be viewed as relation between
words of length $k+1$. Now we extend the notion to words of countably infinite
arbitrary lengths.

\begin{definition}[$\hyFeat$-($\omega,\ell$)-bisimulation]\label{def:o-l-bisim}
Let $\hysig$ be a hybrid signature and let $\hyFeat$ be a set of hybrid language features.
Let $\str{M}= (M, R, \Nom, V)$ and $\str{M}'= (M', R', \Nom, V')$ be models over $\hysig$.  
Let $(Z^k_{i})_{\substack{i \leq \ell\\ k \in \omega}}$ be a system of relations such that $(Z^k_{i})_{\substack{i \leq \ell}}$ is an $\hyFeat$-$(k,\ell)$-bisimulation from $\str{M}$ to $\str{M}'$ for each $k\in\omega$. 
If the following \emph{extensibility} condition is satisfied, for all $k\in\omega$ and all $i<\ell$,
\begin{itemize}[widest={(ext)}]
\item[(ext)] for all $\wrld:m\in M$, $\wrld:m'\in M'$:\\
if $(\wrlds:m,\wrld:m)
\mathrel{Z_{i-1}^k} (\wrlds:m',\wrld:m')$, then $(\cc{\wrlds:m}{\wrld:m},\wrld:m)
\mathrel{Z_{i}^{k+1}} (\cc{\wrlds:m'}{\wrld:m'},\wrld:m')$,
\end{itemize}
then we call $(Z^k_{i})_{\substack{i \leq \ell\\ k \in \omega}}$ an
\emph{$\hyFeat$-$(\omega,\ell)$-bisimulation} from $\str{M}$ to $\str{M}'$.
\end{definition}

The notion of $(\omega, \ell)$-bisimulation focuses on formulas of degree at most $\ell$.  
In order to obtain an $\omega$-bisimulation for countable models in terms of Areces \emph{et al.}~[1], however, it suffices to take the union of these approximating relations provided that they form a chain of inclusion as stated in the next lemma.

\begin{lemma}\label{lemma:o-bisim}
Let $\hysig$ be a hybrid signature and let $\hyFeat$ be a set of hybrid
    language features.  Let $\str{M}$ and $\str{M}'$ be countable models over
  $\hysig$.  Let $(Z^k_{i})_{\substack{i \in\omega \\ k \in \omega}}$ be a
system of relations from $\str{M}$ to $\str{M}'$ such that
\begin{enumerate}
\item $(Z^k_{i})_{\substack{i \leq \ell \\ k \in \omega}}$ is an $\hyFeat$-($\omega,\ell$)-bisimulation, and 
\item $Z^k_{0} \subseteq \dots \subseteq \mathrel{Z^k_{i}} \subseteq \dots \subseteq Z^k_{\ell}$,
\end{enumerate}
for all $\ell\in\omega$.
Then $\str{M}$
and $\str{M}'$ are $\hyFeat$-$\omega$-bisimilar.
\end{lemma}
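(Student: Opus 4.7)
The plan is to build the required $\hyFeat$-$\omega$-bisimulation $(B_k)_{k\in\omega}$ in the sense of Definition~\ref{def:Areces-k-bisim} and~\ref{def:bisimulation} by taking, level by level in $k$, the union over the approximation depth $i$:
\begin{equation*}
B_k \;=\; \bigcup_{i\in\omega} Z^k_{i} \quad\text{for each } k\in\omega.
\end{equation*}
Since each slice $(Z^k_i)_{i\leq\ell}$ is by hypothesis an $\hyFeat$-$(k,\ell)$-bisimulation, every $Z^k_i$ already carries the local information we need; the ascending chain $Z^k_0\subseteq Z^k_1\subseteq\dots$ is what makes the union stable under the operations that ``consume'' one level of approximation.

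Given $(\wrlds:m,\wrld:m) \mathrel{B_k} (\wrlds:m',\wrld:m')$, pick the least $i$ with $(\wrlds:m,\wrld:m) \mathrel{Z^k_i}(\wrlds:m',\wrld:m')$. The conditions that do not consume an index, namely (prop), (wvar) and, depending on $\hyFeat$, (nom), (atv), (atn), transfer verbatim from $Z^k_i$ to $B_k$: the required atomic property or pair already sits inside $Z^k_i \subseteq B_k$. For the conditions that do consume an index, (forth), (back), (bind) and (ex) (with its two halves (ex-f) and (ex-b)), I treat our pair as living in $Z^k_{(i+1)-1}$; by the definition of an $\hyFeat$-$(k,\ell)$-bisimulation for any $\ell\geq i+1$, the corresponding clause produces the needed witness inside $Z^k_{i+1}$, and $Z^k_{i+1}\subseteq B_k$ by construction. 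Thus each $B_k$ is a $k$-bisimulation satisfying exactly the conditions in $\mathsf{cond}(\hyFeat)$.

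For the global extensibility condition~\eqref{eq:ext} of Definition~\ref{def:Areces-k-bisim}, which links $B_k$ and $B_{k+1}$, I invoke the (ext) clause of Definition~\ref{def:o-l-bisim}: from $(\wrlds:m,\wrld:m) \mathrel{Z^k_i}(\wrlds:m',\wrld:m')$, viewed again as sitting in $Z^k_{(i+1)-1}$, we obtain $(\cc{\wrlds:m}{\wrld:m},\wrld:m) \mathrel{Z^{k+1}_{i+1}} (\cc{\wrlds:m'}{\wrld:m'},\wrld:m') \subseteq B_{k+1}$, which is precisely what~\eqref{eq:ext} demands. There is no genuine obstacle here beyond careful bookkeeping of indices; the main thing to watch is that the same shift ``$i\mapsto i+1$'' is forced by every index-consuming clause, and is absorbed by the chain of inclusions, so the union remains closed under all of them. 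Notably, countability of $\str{M}$ and $\str{M}'$ plays no role in this construction — it is a hypothesis inherited from the surrounding framework and used only in subsequent results that compare $\omega$-bisimilarity with hybrid elementary equivalence.
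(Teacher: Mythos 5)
Your proposal is correct and follows essentially the same route as the paper's proof: define $B_k=\bigcup_{i\in\omega}Z^k_i$ and verify each condition by viewing a pair in $Z^k_i$ at level $(i+1)-1$ of a suitable $(k,\ell)$-bisimulation, so that the index-consuming clauses and (ext) land in $Z^k_{i+1}$ resp.\ $Z^{k+1}_{i+1}$, which the chain of inclusions absorbs into the union. Your side remark that countability is not actually used is consistent with the paper, whose own argument likewise never invokes it.
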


\begin{proof}
For any $k\in\omega$, we define  $B_k=\bigcup_{\ell\in\omega} Z^k_\ell$.
It is straightforward to show that $(B_k)_{k\in\omega}$ is an $\hyFeat$-$\omega$-bisimulation.
As an example, we prove that (forth) condition holds.
Assume that $\bigl((\wrlds:m, \wrld:m), (\wrlds:n, \wrld:n)\bigl) \in B_k$ and $\wrld:m' \in M$ with $(\wrld:m, \wrld:m') \in R^\str{M}$.
By definition, there $\ell\in\omega$ such that  $(\wrlds:m, \wrld:m) \mathrel{Z_\ell^k} (\wrlds:n, \wrld:n)$. 
By (forth) property of $Z_\ell^k$, there exists $\wrld:n'\in R^\str{N}(n)$ such that $(\wrlds:m, \wrld:m') \mathrel{Z_{\ell+1}^k} (\wrlds:n, \wrld:n')$.
Hence, $\bigl((\wrlds:m, \wrld:m'), (\wrlds:n, \wrld:n')\bigl) \in B_k$.
\end{proof}

Using Lemma~\ref{karp1}, we obtain the following useful result where we
need not impose any restriction on the number of world variables: 

\begin{lemma}\label{lem:karp} 
Consider a hybrid language defined over a finite signature $\hysig$ with features from $\hyFeat$.
Let $\str{M}$ and $\str{M}'$ be Kripke structures over $\hysig$ and let $\ell \in \omega$.
Then the following are equivalent: 
\begin{enumerate}[label={(\roman*)}, widest={ii}]
  \item For each hybrid \emph{sentence} $\phi$ in the hybrid language of $\hyFeat$ with $\dg(\phi)\leq \ell$, we have
\begin{equation*}
\str{M}, \wrld:m \hymodels \phi \quad\text{if and only if}\quad \str{M}', \wrld:m' \hymodels \phi.
\end{equation*}

  \item There is an $\hyFeat$-$(\omega,\ell)$-bisimulation
$(Z^k_{i})_{\substack{i \leq \ell\\ k \in \omega}}$ from $\str{M}$ to $\str{M}'$
such that 
\begin{enumerate}
\item for any constant sequences $\wrlds:m\in M^k$ and $\wrlds:m'\in
{M'}^k$, with $\wrlds:m(j) = \wrld:m$ and $\wrlds:m'(j) = \wrld:m'$ for all $1 \leq
j \leq k$, respectively, we have $ (\wrlds:m, \wrld:m) \mathrel{Z^k_{0}} (\wrlds:m', \wrld:m')$; and
\item $Z^k_{0} \subseteq \dots \subseteq \mathrel{Z^k_{i}} \subseteq \dots
\subseteq Z^k_{\ell}$.
\end{enumerate}
\end{enumerate}
\end{lemma}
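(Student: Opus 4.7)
The plan is to handle both directions by reducing to Lemma~\ref{karp1} and using the extensibility condition (ext) to lift from the $k=0$ case to arbitrary $k$.

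For the easy direction (ii)$\Rightarrow$(i), the $k=0$ component $(Z^0_i)_{i \leq \ell}$ of the $(\omega, \ell)$-bisimulation is itself an $\hyFeat$-$(0, \ell)$-bisimulation. Applying (ii)(a) to the empty sequence ($k=0$) gives $\wrld:m \mathrel{Z^0_0} \wrld:m'$, and (ii)(b) gives the monotone chain $Z^0_0 \subseteq \dots \subseteq Z^0_\ell$. Lemma~\ref{karp1} then yields agreement on all hybrid formulas of degree $\leq \ell$ with $k=0$ free world variables, which is precisely agreement on hybrid sentences of degree $\leq \ell$.

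For (i)$\Rightarrow$(ii), I would define, for each $k \in \omega$ and $i \leq \ell$,
\[
Z^k_i := \{((\wrlds:n, \wrld:n), (\wrlds:n', \wrld:n')) : \htp{\ell-i}{M}(\wrlds:n, \wrld:n) = \htp{\ell-i}{M'}(\wrlds:n', \wrld:n')\},
\]
where the hybrid types are taken relative to formulas with free world variables among $\{\wvar:x_1, \ldots, \wvar:x_k\}$. Monotonicity (ii)(b) is immediate since a smaller degree bound imposes a weaker constraint on types. For each fixed $k$, the verification that $(Z^k_i)_{i\leq \ell}$ is an $\hyFeat$-$(k, \ell)$-bisimulation proceeds exactly as in the (i)$\Rightarrow$(ii) half of Lemma~\ref{karp1}: each bisimulation condition is witnessed by an appropriate hybrid operator applied to the matching types. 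Extensibility (ext) holds because the type of $(\wrlds:m, \wrld:m)$ at degree $\ell-(i-1)$ determines the type of $(\cc{\wrlds:m}{\wrld:m}, \wrld:m)$ at degree $\ell-i$, since appending the current state to the memory and binding a fresh variable to it is semantically inert at one lower degree of nesting.

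The main obstacle is verifying condition (ii)(a): for each $k$, the constant sequence pair $((\wrld:m, \ldots, \wrld:m), \wrld:m)$ and $((\wrld:m', \ldots, \wrld:m'), \wrld:m')$ must lie in $Z^k_0$, which amounts to full agreement on hybrid formulas of degree $\leq \ell$ with free variables $\wvar:x_1, \ldots, \wvar:x_k$ under the constant assignment. This is where condition (i) is essentially used: under the constant assignment at the current state, each free variable $\wvar:x_j$ behaves as a reference to the base point, so the binding and naming machinery afforded by $\hyFeat$ (namely $\store$, $\exists$, or $\Nom$, combined with $\jump$ when available) allows one to translate each such formula into a sentence in the hybrid language of $\hyFeat$ whose satisfaction at the base point is equivalent to that of the original. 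Sentence agreement from (i) then delivers the required formula agreement.
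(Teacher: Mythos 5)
The (ii)$\Rightarrow$(i) direction as you set it up does not go through. Lemma~\ref{karp1} at $k=0$ only yields agreement on formulas \emph{all} of whose world variables are among $\{x_1,\dots,x_k\}$: in its proof the cases $\store x_e\psi$, $\jump_{x_e}\psi$ and $\exists x_e\psi$ require the index $e$ to be at most $k$, so that the memory sequence has an $e$-th slot for (bind), (atv) and (ex). So ``world variables from $\{x_1,\dots,x_k\}$'' covers bound occurrences too, not just free ones, and the $k=0$ component says nothing about a sentence such as $\store x_1\Diamond x_1$. Indeed a $(0,\ell)$-bisimulation is essentially an ordinary bounded-depth modal bisimulation, and it does not preserve that sentence (one reflexive point versus a two-element cycle). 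This is precisely why clause (ii)(a) is stated for \emph{every} $k$: the paper's proof takes $k$ large enough to include all variables occurring in the given sentence $\phi$, anchors Lemma~\ref{karp1} at the constant tuples supplied by (a) for that $k$, and then uses that for a sentence the assignment is irrelevant. Your reduction to $k=0$ leaves (a) essentially unused and misses every sentence that contains a binder or $\jump_{x}$.

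In (i)$\Rightarrow$(ii) your definition of $Z^k_i$ via equality of degree-$(\ell-i)$ types is the paper's, and delegating the $(k,\ell)$-conditions to Lemma~\ref{karp1} matches the paper. But your discharge of the ``main obstacle'' (ii)(a) is not sound as stated (the paper is admittedly terse here as well): translating a degree-$\leq\ell$ formula in $x_1,\dots,x_k$, evaluated under the constant assignment, into a sentence of the fragment fails when $\hyFeat$ contains no binder --- for $\hyFeat=\{\jump\}$, the formula $\Diamond x_1$ under the constant assignment asserts that the current point sees itself, which no sentence of that fragment expresses --- and even when $\store$ or $\exists$ is available, the prefix $\store x_1\cdots\store x_k$ raises the degree by $k$, overshooting the degree bound $\ell$ in (i), so sentence agreement up to degree $\ell$ does not hand you the required type equality. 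Likewise, your one-line justification of (ext), that ``binding a fresh variable to the current state is semantically inert at one lower degree,'' tacitly presupposes a binder in $\hyFeat$ and has the same degree bookkeeping to check; the paper instead argues via the binder-free variable substitution $\psi\mapsto\psi(x_{k+1}/x_k)$ and its equivalence ($\dag$). As it stands, both directions of your proposal have genuine gaps at exactly the points where condition (a) and the (ext) clause carry the weight.
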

\begin{proof}
Assume (i) and, as in the proof of Lemma~\ref{karp1}, define
$(Z^k_{i})_{\substack{i \leq \ell}}$ by letting $(\wrlds:n, \wrld:n) \mathrel{Z^k_i}
(\wrlds:n', \wrld:n')$ if and only if $\htp{\ell-i}{M}(\wrlds:m, \wrld:m) =
\htp{\ell-i}{M'}(\wrlds:n, \wrld:n)$.  By Lemma~\ref{karp1}, each
$(Z^k_{i})_{\substack{i \leq \ell}}$ is a $(k,\ell)$-bisimulation, so we only
need to verify the condition
\bgroup\makeatletter\@mathmargin28pt\makeatother\begin{equation}\tag{ext}\label{eq:Z-to-Z}
(\wrlds:m, \wrld:m) \mathrel{Z_{i-1}^k} (\wrlds:m', \wrld:m') \;\implies\;
(\cc{\wrlds:m}{ \wrld:m}, \wrld:m) \mathrel{Z_{i}^{k+1}} (\cc{\wrlds:m'}{\wrld:m'}, \wrld:m').
\end{equation}\egroup

To that end, suppose 
$\htp{\ell-(i-1)}{M}(\wrlds:m, \wrld:m) = \htp{\ell-(i-1)}{M'}(\wrlds:m', \wrld:m')$ and take an arbitrary formula $\phi\in\htp{\ell-{i}}{M}(\cc{\wrlds:m}{\wrld:m}, \wrld:m)$.
Notice that $\phi\in\htp{\ell-(i-1)}{M}(\cc{\wrlds:m}{\wrld:m}, \wrld:m)$ and $\phi = \phi(\wvar:x_1,\dots,\wvar:x_k,\wvar:x_{k+1})$ with free world variables among $\wvar:x_1,\dots,\wvar:x_k,\wvar:x_{k+1}$.
For any formula $\psi$ with free world variables among $\wvar:x_1,\dots,\wvar:x_k,\wvar:x_{k+1}$, let
$\psi^-$ be the formula  $\psi(\wvar:x_{k+1}/\wvar:x_k)$. Then, for any structure
$\str{N}$, any element $\wrld:n\in N$, and any constant sequence
$\wrlds:n$ of length $k$ we have
\begin{equation}\tag{$\dag$}\label{eq:phi-minus}
\str{N},\cc{\wrlds:n}{\wrld:n},\wrld:n\hymodels \psi\iff
\str{N},\wrlds:n,\wrld:n\hymodels \psi^-
\end{equation}
hence, we obtain the following series of equivalences
\begin{align*}
  \phi\in\htp{\ell-(i-1)}{M}(\cc{\wrlds:m}{\wrld:m},\wrld:m)
  &\iff
    \phi^-\in\htp{\ell-(i-1)}{M}(\wrlds:m,\wrld:m)\\
   &\iff
     \phi^-\in\htp{\ell-(i-1)}{\str{M}'}(\wrlds:m',\wrld:m')\\
 &\iff
    \phi\in\htp{\ell-(i-1)}{{M'}}(\cc{\wrlds:m'}{\wrld:m'},\wrld:m')  
\end{align*}
of which the first and the last follow by~\eqref{eq:phi-minus} and the second
follows by assumption.
We obtain $\phi\in\htp{\ell-i}{M'}(\cc{\wrlds:m'}{\wrld:m'},\wrld:m')$.
Since $\phi$ was arbitrary chosen, we have that $\htp{\ell-i}{M}(\cc{\wrlds:m}{\wrld:m}, \wrld:m)\subseteq \htp{\ell-i}{M'}(\cc{\wrlds:m'}{\wrld:m'},\wrld:m')$.
The proof of the converse inclusion is symmetrical.
Hence, $(\cc{\wrlds:m}{ \wrld:m}, \wrld:m) \mathrel{Z_{i}^{k+1}} (\cc{\wrlds:m'}{\wrld:m'}, \wrld:m')$.
\smallskip

For (ii) $\implies$ (i) take an arbitrary formula $\phi$ with $\dg(\phi)\leq
\ell$. Let the variables occurring in $\phi$ be from the set $\{\wrld:x_1, \dots,
\wrld:x_k\}$, so that Lemma~\ref{karp1} for $k$ applies. Since any
$(\omega,\ell)$-bisimulation restricts naturally to a $(k,\ell)$-bisimulation,
there is a $(k,\ell)$-bisimulation $(Z_i^k)_{i\leq \ell}$ between $\str{M}$ and
$\str{M}'$ such that $(\wrlds:m,\wrld:m) \mathrel{Z_{0}^k} (\wrlds:m',\wrld:m')$,
and $Z^k_{0}\subseteq \dots \subseteq Z^k_{i} \subseteq \dots \subseteq
Z^k_{n}$.  Applying Lemma~\ref{karp1}(ii) we get that
\begin{equation*}
\str{M}, \wrlds:m, \wrld:m \hymodels \phi \iff
\str{M}', \wrlds:m', \wrld:m' \hymodels \phi
\end{equation*}
as desired.
\end{proof}

It will now be useful to define an analogue of the usual notion of a \emph{type}
but only for formulas that happen to be equivalent to standard translations.

\begin{definition}[Standard translation type \& elementary equivalence]\label{def:http} 
Consider a hybrid language defined over a signature $\hysig$ with features from $\hyFeat$.
Let $(\str{M},\wrld:m)$ be a pointed model over $\hysig$.
Then $\http{M}(\wrld:m)$ denotes the collection of first-order formulas equivalent to translations of hybrid sentences holding at $\wrld:m$ in $\str{M}$, i.e., 
\begin{equation*}
\http{M}(\wrld:m) = \{ \phi \in \fohylang : \stacked{\phi \leftrightarrow \ST_x(\psi)\\
  \text{for some hybrid sentence }\psi \text{ with } \str{M}, \wrld:m \hymodels \psi \}\ \text{.}}
\end{equation*}
Two pointed $\hysig$-models $(\str{M},\wrld:m)$ and $(\str{N},\wrld:n)$ are
$\mathcal{F}$-\emph{elementarily equivalent}, in symbols,
$(\str{M},\wrld:m)\equiv(\str{N},\wrld:n)$ if they cannot be distinguished by
any hybrid sentence, i.e.,
\begin{equation*} 
\str{M}, \wrld:m \hymodels \psi \text{ iff } \str{N}, \wrld:n \hymodels \psi \text{ for all hybrid sentences $\psi$ over $\hyFeat$}
\ \text{.}
\end{equation*}
\end{definition}

By Lemma ~\ref{lemma:o-bisim} and Lemma~\ref{lem:karp}, $\omega$-bisimulation is stronger than $\mathcal{F}$-elementary equivalence.
However, the following result shows that first-order sentences cannot distinguish between $\omega$-bisimulation and $\mathcal{F}$-elementary equivalence.

\begin{theorem}\label{th:lindstrom}
Consider a hybrid language defined over a finite signature $\hysig$ with features from $\hyFeat$.
If a first-order formula $\phi(x)$ over $\fohysig$ can distinguish between two $\mathcal{F}$-elementary equivalent pointed models $(\str{M}_1, \wrld:m_1)$ and $(\str{M}_2, \wrld:m_2)$ over $\hysig$, that is,
$\str{M}_1\fomodels \phi[\wrld:m_1]$ and 
$\str{M}_2\fomodels \neg\phi[\wrld:m_2]$ and 
$(\str{M}_1, \wrld:m_1)\equiv(\str{M}_2, \wrld:m_2)$, then $\phi(x)$ can distinguish $\hyFeat$-$\omega$-bisimilar Kripke structures, that is, there exist pointed models $(\str{N}_1, \wrld:n_1)$ and $(\str{N}_2, \wrld:n_2)$ such that 
\begin{enumerate*}[label=(\alph*)]
\item $\str{N}_1\fomodels \phi[\wrld:n_1]$ and $\str{N}_2\fomodels \neg\phi[\wrld:n_2]$, and
\item $\str{N}_1$ and $\str{N}_2$ are $\hyFeat$-$\omega$-bisimilar.
\end{enumerate*}
\end{theorem}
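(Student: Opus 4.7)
The plan is to reduce the statement to countable models via the downward L\"owenheim-Skolem theorem and then invoke the approximation machinery (Lemma~\ref{lem:karp} and Lemma~\ref{lemma:o-bisim}) to assemble the required $\hyFeat$-$\omega$-bisimulation. This is the Lindstr\"om-style move pioneered by Badia~\cite{Bad16}: the countability of the reduced models is what turns finite-degree bisimulation approximations into a genuine $\omega$-bisimulation.

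First I apply the downward L\"owenheim-Skolem theorem in the first-order signature $\fohysig$ to choose countable elementary submodels $(\str{N}_i, \wrld:n_i) \preceq (\str{M}_i, \wrld:m_i)$ with $\wrld:n_i := \wrld:m_i$ for $i = 1, 2$. Because elementary embeddings preserve first-order satisfaction, one has $\str{N}_1 \fomodels \phi[\wrld:n_1]$ and $\str{N}_2 \fomodels \neg\phi[\wrld:n_2]$, which settles (a). Since every hybrid sentence translates via $\ST_x$ to a first-order formula, first-order elementary equivalence implies $\hyFeat$-elementary equivalence, and chaining yields $(\str{N}_1, \wrld:n_1) \equiv_{\hyFeat} (\str{N}_2, \wrld:n_2)$ by the hypothesis on the $\str{M}_i$.

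For (b) I must exhibit an $\hyFeat$-$\omega$-bisimulation between $\str{N}_1$ and $\str{N}_2$. For each $\ell \in \omega$, the pointed $\hyFeat$-equivalence of $(\str{N}_1, \wrld:n_1)$ and $(\str{N}_2, \wrld:n_2)$ restricted to sentences of degree $\leq \ell$ yields, by Lemma~\ref{lem:karp}, an $\hyFeat$-$(\omega, \ell)$-bisimulation with initial condition at the constant sequences over $\wrld:n_1$ and $\wrld:n_2$. Following the explicit construction in the proof of Lemma~\ref{lem:karp}(i$\Rightarrow$ii), these approximations are type-equality relations in the $\hyFeat$-hybrid language. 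Organising them by a degree parameter that is decreasing in $i$, so that the chain condition $Z^k_0 \subseteq Z^k_1 \subseteq \cdots$ of Lemma~\ref{lemma:o-bisim} is satisfied, produces a single system $(Z^k_i)_{i, k \in \omega}$ meeting the hypotheses of that lemma. Applying Lemma~\ref{lemma:o-bisim} to the countable models $\str{N}_1$ and $\str{N}_2$ then produces the desired $\hyFeat$-$\omega$-bisimulation, completing (b).

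The main obstacle I anticipate is the book-keeping in Step~3: packaging the per-$\ell$ approximations into a single system that simultaneously satisfies the chain inclusion of Lemma~\ref{lemma:o-bisim} and the $(\omega, \ell)$-bisimulation conditions for every $\ell$. The subtlety is that the natural parameterisation of $\hyFeat$-hybrid types by degree runs in the opposite direction from the chain demanded by Lemma~\ref{lemma:o-bisim}, so the indexing must be chosen carefully (e.g.\ by a degree decreasing in $i$). Countability of the submodels obtained from L\"owenheim-Skolem is essential here, reflecting the fact that plain $\hyFeat$-elementary equivalence does not imply $\hyFeat$-$\omega$-bisimilarity without some smallness assumption on the models; it is what makes the union step in Lemma~\ref{lemma:o-bisim} deliver a genuine $\omega$-bisimulation rather than just a set-theoretic limit of approximations.
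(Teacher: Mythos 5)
There is a genuine gap, and it sits exactly where you anticipate it: the ``organising'' step that merges the per-$\ell$ approximations into a single system $(Z^k_i)_{i,k\in\omega}$ satisfying the hypotheses of Lemma~\ref{lemma:o-bisim}. This is not book-keeping; it cannot be done from $\hyFeat$-elementary equivalence alone. The relations produced by Lemma~\ref{lem:karp} are type-equality relations $E^k_d$ (``same hybrid type of degree $\leq d$''), and each back-and-forth condition ((forth), (back), (bind), (ex)) only carries $E^k_d$ to $E^k_{d-1}$: one degree is consumed per step. A single system whose truncation at \emph{every} $\ell$ is an $(\omega,\ell)$-bisimulation would have to realise an infinite strictly decreasing sequence of degrees, which is impossible. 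More decisively, if your Step~3 worked it would prove that any two countable $\hyFeat$-elementarily equivalent pointed models are $\hyFeat$-$\omega$-bisimilar; for $\hyFeat=\emptyset$ this says countable modally equivalent pointed Kripke structures are bisimilar, which is false (take a root with, for each $n$, a finite path of length $n$, versus the same model with one additional infinite path: modally equivalent, countable, not bisimilar). So countable elementary submodels of $\str{M}_1$ and $\str{M}_2$ obtained by downward L\"owenheim--Skolem are simply not the right witnesses; keeping (submodels of) the original models cannot succeed, and this is precisely why the theorem only asserts the existence of \emph{some other} pair $(\str{N}_1,n_1)$, $(\str{N}_2,n_2)$.

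The paper's proof differs at exactly this point. It expands the signature with predicates $I^k_\ell$ (plus $U_1,U_2$ and primed copies of the nominals and proposition predicates) and writes down first-order axioms stating that the $I^k_\ell$ form an $\hyFeat$-$(\omega,\ell)$-bisimulation system linking a $\phi$-point to a $\neg\phi'$-point. Finite satisfiability of this theory is witnessed inside $\str{M}_1\uplus\str{M}_2$ using the per-$\ell$ systems from Lemma~\ref{lem:karp} (only finitely many $I^k_\ell$ occur in a finite subset, so no merging across $\ell$ is ever needed). Compactness then yields a model of the whole theory, L\"owenheim--Skolem makes it countable, and its two reducts are the new models $\str{N}_1,\str{N}_2$ carrying a \emph{uniform} system of relations to which Lemma~\ref{lemma:o-bisim} applies. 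If you want to repair your argument, you must incorporate this compactness step (or an equivalent saturation argument); countability of the original models by itself does not convert degree-bounded approximations into an $\omega$-bisimulation.
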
  
\begin{proof}
Let $(\str{M}_1, \wrld:m_1)$ and $(\str{M}_2, \wrld:m_2)$ be two $\mathcal{F}$-elementary equivalent pointed models.
Let $\phi(x)$ be a first-order formula such that  $\str{M}_1\fomodels \phi[\wrld:m_1]$ and $\str{M}_2\fomodels \neg\phi[\wrld:m_2]$. 
Since $\hysig = (\Prop, \Nom)$ has only finitely many atoms, Lemma~\ref{lem:karp} applies.
Thus, for each $\ell<\omega$,  there is an $(\omega,\ell)$-bisimulation
$(Z^k_{i})_{\substack{i \leq \ell\\ k \in \omega}}$
from $(\str{M}_1, \wrld:m_1)$ to $(\str{M}_2, \wrld:m_2)$
such that $(\wrlds:m_1, \wrld:m_1) \mathrel{Z^k_{0}} (\wrlds:m_2, \wrld:m_2)$
where $\wrlds:m_1$ ($\wrlds:m_2$, respectively) is a constant sequence
of elements $\wrld:m_1$ ($\wrld:m_2$, respectively), and furthermore
$Z^k_{0} \subseteq \dots \subseteq Z^k_{i} \subseteq \dots \subseteq Z^k_{\ell}$. 

In the following, we define, in first-order logic, an object-level characterization of $\hyFeat$-$(\omega,\ell)$-bisimulation between $\str{M}_1$ and $\str{M}_2$.
Expand the signature $(R, \foProp, \Nom)$ by adding:
\begin{itemize}
\item two unary predicates $U_i$ ($i \in \{1,2\}$), 
\item countably many predicates $I_{\ell}^k $ ($\ell, k \in\omega$) each of arity $(k+1)\times (k+1)$, 
\item copies $s'$ of  each nominal $s\in\Nom$, and 
\item copies $P'$ of each predicate $P\in\foProp$.
\end{itemize}
To make the notation more transparent, we will
write $I_{\ell}^k $ in the form
$I_{\ell}^k\kern-.3em\begin{pmatrix}
                    x_1,\dots,x_k,x\\
                    y_1,\dots,y_k,y
                  \end{pmatrix}$.
For each  $I_{\ell}^k$ we let
$J_{\ell}^k\kern-.3em\begin{pmatrix}
                    x_1,\dots,x_k,x\\
                    y_1,\dots,y_k,y
                  \end{pmatrix}$
stand for the formula
\begin{equation*}
\bigwedge_{1\leq j\leq k} U_1(x_j)\wedge U_1(x) \wedge
\bigwedge_{1\leq j\leq k} U_2(y_j)\wedge U_2(y) \wedge
I_{\ell}^k\kern-.3em\begin{pmatrix}
                    x_1,\dots,x_k,x\\
                    y_1,\dots,y_k,y
                  \end{pmatrix}.
\end{equation*}
Further, let $\phi'$ be the result of replacing each nominal $s$ everywhere in
$\phi$ by its copy $s'$.  Call the expanded signature $\fohysigE$.
Consider the following finite sets of formulas over $\fohysigE$.

\begin{small}
\begin{itemize}[widest={($\Psi_{\text{forth}}$)}, itemsep=4pt]
\item[($\text{init}$)]   
$\exists x, y\, J_{0}^k\kern-.3em\begin{pmatrix}
  x,\dots, x,x\\ 
  y, \dots, y,y
\end{pmatrix} 
\wedge \phi(x)  \wedge \neg \phi'(y)$
\item[($\Psi_{\text{prop}}$)]
$J_{\ell}^k\kern-.3em\begin{pmatrix}
                    x_1,\dots,x_k,x\\
                    y_1,\dots,y_k,y
                  \end{pmatrix}
\rightarrow(P(x)  \leftrightarrow P'(y))$ for all predicates $P\in\foProp$ 

\item[($\Psi_{\text{wvar}}$)]
$J_{\ell}^k\kern-.3em\begin{pmatrix}
                    x_1,\dots,x_k,x\\
                    y_1,\dots,y_k,y
                  \end{pmatrix}
  \rightarrow \bigwedge_{j\leq k} (x_j=x \leftrightarrow y_j =y)$
\item[($\Psi_{\text{forth}}$)]
$J_{\ell}^k\kern-.3em\begin{pmatrix}
                    x_1,\dots,x_k, x\\
                    y_1,\dots,y_k, y\end{pmatrix} 
\wedge U_1(z_1)\wedge x R z_1 \rightarrow
\exists z_2\, J_{\ell+1}^k\kern-.3em\begin{pmatrix}
                                 x_1,\dots,x_k,z_1\\
                                 y_1,\dots,y_k,z_2
                               \end{pmatrix}
 \wedge y R z_2$
\item[($\Psi_{\text{back}}$)]
$J_{\ell}^k\kern-.3em\begin{pmatrix}
                    x_1,\dots,x_k,x\\
                    y_1,\dots,y_k,y
                  \end{pmatrix} 
\wedge U_2(z_2)\wedge y R z_2 \rightarrow
\exists z_1\, J_{\ell+1}^k\kern-.3em\begin{pmatrix}
                                 x_1,\dots,x_k,z_1\\
                                 y_1,\dots,y_k,z_2
                               \end{pmatrix}
\wedge x R z_1$
\item[($\Psi_{\text{nom}}$)]
$J_{\ell}^k\kern-.3em\begin{pmatrix}
                    x_1,\dots,x_k,x\\
                    y_1,\dots,y_k,y
                  \end{pmatrix}
\rightarrow(x=s \leftrightarrow y = s')$ for all nominals $s\in\Nom$
                  
\item[($\Psi_{\text{bind}}$)]
$J_{\ell}^k\kern-.3em\begin{pmatrix}
                    x_1,\dots,x_k,x\\
                    y_1,\dots,y_k,y
                  \end{pmatrix}
\rightarrow
\bigwedge_{j\leq k}J_{\ell}^k\kern-.3em\begin{pmatrix}
                            x_1,\dots,x_{j-1},x,x_{j+1}\dots,x_k,x\\
                            y_1,\dots, y_{j-1},y ,y_{j+1} \dots,y_k,y
                                 \end{pmatrix}$

\item[($\Psi_{\text{atv}}$)]
$J_{\ell}^k\kern-.3em\begin{pmatrix}
                    x_1,\dots,x_k,x\\
                    y_1,\dots,y_k,y
                  \end{pmatrix}
\rightarrow
\bigwedge_{j\leq k}J_{\ell}^k\kern-.3em\begin{pmatrix}
                            x_1,\dots,x_k,x_j\\
                            y_1,\dots,y_k,y_j
                                 \end{pmatrix}$

\item[($\Psi_{\text{atn}}$)]
$ J_{\ell}^k\kern-.3em\begin{pmatrix}
                    x_1,\dots,x_k,\nom:s\\
                    y_1,\dots,y_k,\nom:s'
                  \end{pmatrix}$ for all nominals $\nom:s\in\Nom$
                                 
\item[($\Psi_{\text{ex-f}}$)]
$J_{\ell}^k\kern-.3em\begin{pmatrix}
                    x_1,\dots,x_k, x\\
                    y_1,\dots,y_k, y\end{pmatrix} 
\wedge U_1(z_1) \rightarrow
\bigwedge_{j\leq k}\exists z_2\, 
J_{\ell+1}^k\kern-.3em\begin{pmatrix}
                            x_1,\dots,x_{j-1},{z_1},x_{j+1}\dots,x_k,{x}\\
                            y_1,\dots, y_{j-1},{z_2} ,y_{j+1} \dots,y_k,{y}
                                 \end{pmatrix}$

\item[($\Psi_{\text{ex-b}}$)]
$J_{\ell}^k\kern-.3em\begin{pmatrix}
                    x_1,\dots,x_k, x\\
                    y_1,\dots,y_k, y\end{pmatrix} 
\wedge U_2(z_2) \rightarrow
\bigwedge_{j\leq k}\exists z_1\, 
J_{\ell+1}^k\kern-.3em\begin{pmatrix}
                            x_1,\dots,x_{j-1},{z_1},x_{j+1}\dots,x_k,{x}\\
                            y_1,\dots, y_{j-1},{z_2} ,y_{j+1} \dots,y_k,{y}
                                 \end{pmatrix}$
 
\item[($\Psi_{\text{ext}}$)]
$J_{\ell{-1}}^k\kern-.3em\begin{pmatrix}
                    x_1,\dots,x_k,x\\
                    y_1,\dots,y_k,y
                  \end{pmatrix}
\rightarrow
J_{{\ell}}^{k+1}\kern-.3em\begin{pmatrix}
                         x_1,\dots,x_k,x,x\\
                         y_1,\dots,y_k,y,y
                       \end{pmatrix}$
\end{itemize}
\end{small}

Notice that each set of formulas $\Psi_S$ defined above is indexed by a
  condition $S$ from Definition~\ref{def:base-bi} and Definition~\ref{ext-bi}.
  We define $\Psi_\hyFeat =\text{(init)} \cup (\bigcup_{S\in
    cond(\hyFeat)}\Psi_S)$ and we let $\Phi$ be the set of universal closures of
  $\Psi_\hyFeat$.  We claim that every finite subset of $\Phi$ is consistent.
To see this let $\Phi_0 $ be one such finite subset and let $k, \ell \in \omega$
be such that $J_{i}^j$ occurs in $\Phi_0$ only if $j \leq k$ and $i \leq
\ell$. Recall that there is an $(\omega,\ell)$-bisimulation
$(Z^k_{i})_{\substack{i \leq \ell\\ k \in \omega}}$ from $(\str{M}_1,
\wrld:m_1)$ to $(\str{M}_2, \wrld:m_2)$ such that $(\wrlds:m_1, \wrld:m_1)
\mathrel{Z^k_{0}} (\wrlds:m_2, \wrld:m_2)$ where $\wrlds:m_1$ and $\wrlds:m_2$
are constant sequences of length $k$, consisting of elements $\wrld:m_1$ and
$\wrld:m_2$, respectively, and furthermore $Z^k_{0}\subseteq \dots \subseteq
Z^k_{i} \subseteq \dots \subseteq Z^k_{\ell}$.  We can suppose without loss of
generality that $M_1 \cap M_2 = \emptyset$ (if this is not the case, take
disjoint isomorphic copies of $M_1$ and $M_2$).  We construct $\str{M}_3$ by
putting:
\begin{itemize}
\item $M_3 = M_1 \cup M_2$
\item $R^{\str{M}_3} = R^{\str{M}_1} \cup R^{\str{M}_2}$
\item $U_{i}^{\str{M}_3} = M_i$ for all $i \in \{1,2\}$
\item $P^{\str{M}_3} = P^{\str{M}_1}$ and  ${P'}^{\str{M}_3}= P^{\str{M}_2}$ for all predicates $P\in\foProp$
\item $s^{\str{M}_3} = s^{\str{M}_1} $ and $s^{\prime\str{M}_3} = s^{\str{M}_2} $ for all nominals $s\in\Nom$
\item $(I_{i}^j)^{\str{M}_3} = Z_{i}^j$ for  all $j \leq k$ and $i \leq \ell$
\end{itemize}

This construction produces a model, since we have no non-constant functions in the signature.  
It follows that $\str{M}_3 \fomodels \Phi_0$, as each set of formulae $\psi_S$,
  where $S\in cond(\hyFeat)$, used to define $\Phi$ are simply restatements in
  first-order logic of conditions appearing in Definition~\ref{def:base-bi} and
  Definition~\ref{ext-bi}.

Since every finite subset of $\Phi$ is consistent, by compactness we get that
$\Phi$ has a model, say, $\str{M}_4$, and by the L\"owenheim-Skolem theorem we can also take it to be countable.  Thus, there are $\wrld:m, \wrld:n \in M_4
$ such that $\str{M}_4 \fomodels \phi[\wrld:m]$, and $\str{M}_4 \fomodels
\neg\phi'[\wrld:n] $.  
\begin{center}
\begin{tikzcd}
 & \str{M}_4 \ar[r,dotted,no head] &\fohylangE&\\
\substack{\str{M}_5 \fomodels \phi[m] \\  \str{M}_6 \fomodels \neg \phi[n]} \ar[r,dotted,no head]& \fohylang \ar[rr,"b",swap] \ar[ur,hook] && \fohylangP \ar[ul,hook] & \str{M}'_6 \fomodels\neg \phi'[n]\ar[l,dotted,no head]
\end{tikzcd}
\end{center}
Taking the reduct of $\str{M}_4$ to the signature $\fohysig$ we obtain a model $\str{M}_5 \fomodels \phi[\wrld:m]$.
Let $\fohysigP$ be the first-order signature obtained from $\fohysig$ by replacing each nominal $s\in\Nom$ with its copy $s'$ and each predicate $P\in\foProp$ with its copy $P'$.
We obtain a bijective mapping $b: \fohylang \to \fohylangP$.
Now, taking the reduct of $\str{M}_4$ to the signature $\fohysigP$ we obtain a model $\str{M}'_6\fomodels\neg \phi'[\wrld:n]$.  
Then reducing $\str{M}'_6$ across the bijection $b$, we obtain a model $\str{M}_6 \fomodels \neg\phi[\wrld:n]$.
By the countability of $\str{M}_4$, the system of relations
 $(I^k_\ell)_{\substack{i \leq \ell\\ k \in \omega}}^{\str{M}_4}$ is an $\hyFeat$-$(\omega,\ell)$-bisimulation for all $\ell\in\omega$.
 By Lemma~\ref{lemma:o-bisim}, the structures $\str{M}_5$ and $\str{M}_6$ are $\hyFeat$-$\omega$-bisimilar.
\end{proof}

\begin{theorem}\label{thm:invariance} 
Consider a hybrid language defined over a finite signature $\hysig$ with features from $\hyFeat$.
A first-order formula $\phi(x)$ is equivalent to the standard translation of a hybrid sentence if, and only if, 
it is invariant under $\hyFeat$-$\omega$-bisimulations.
\end{theorem}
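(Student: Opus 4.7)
The plan is to prove the two directions separately: the easy one by direct appeal to Lemma~\ref{lem:karp}, the hard one through a Lindström-style compactness argument that pivots on Theorem~\ref{th:lindstrom}.

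For the easy direction, suppose $\phi(x)\equiv\ST_x(\psi)$ for a hybrid sentence $\psi$, and take $\hyFeat$-$\omega$-bisimilar pointed models $(\str{M}_1,\wrld:m_1)$ and $(\str{M}_2,\wrld:m_2)$ via a family $(B_k)_{k\in\omega}$ with $(\wrld:m_1,\wrld:m_2)\in B_0$. For any $\ell\in\omega$, setting $Z^k_i:=B_k$ for every $i\leq\ell$ yields an $\hyFeat$-$(\omega,\ell)$-bisimulation: the chain condition in Lemma~\ref{lem:karp}(ii)(b) is trivial, the extensibility clause of Definition~\ref{def:o-l-bisim} collapses onto (ext) of the underlying $\omega$-bisimulation, and iterating (ext) from $B_0$ supplies the constant-sequence pair demanded by Lemma~\ref{lem:karp}(ii)(a). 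Taking $\ell\geq\dg(\psi)$ and invoking Lemma~\ref{lem:karp} yields $\str{M}_1,\wrld:m_1\hymodels \psi \iff \str{M}_2,\wrld:m_2\hymodels \psi$, which (SE) translates back to the first-order side for $\phi(x)$.

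For the hard direction, assume $\phi(x)$ is invariant under $\hyFeat$-$\omega$-bisimulations and set
\[
T=\{\ST_x(\chi) : \chi\text{ is a hybrid sentence over }\hyFeat\text{ and }\phi(x)\fomodels \ST_x(\chi)\}.
\]
The goal is to prove $T\fomodels \phi$; once this holds, compactness extracts finitely many $\chi_1,\dots,\chi_n$ with $\bigwedge_i \ST_x(\chi_i)\fomodels \phi$, and since $\ST_x$ commutes with conjunction we conclude $\phi\equiv \ST_x(\chi_1\wedge\cdots\wedge\chi_n)$. To prove $T\fomodels \phi$, I would argue by contradiction: if $\str{M}_1\fomodels T[\wrld:m_1]$ but $\str{M}_1\not\fomodels \phi[\wrld:m_1]$, then the set
\[
\Gamma=\{\ST_x(\chi) : \str{M}_1,\wrld:m_1\hymodels \chi\}\cup\{\phi(x)\}
\]
is finitely satisfiable, because the unsatisfiability of any finite subset would force $\phi(x)\fomodels \ST_x(\neg\chi)$, hence push $\ST_x(\neg\chi)$ into $T$, contradicting $\str{M}_1\fomodels T[\wrld:m_1]$ together with $\str{M}_1,\wrld:m_1\hymodels \chi$. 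Compactness then furnishes $(\str{M}_2,\wrld:m_2)\fomodels \Gamma$; closure of hybrid sentences under negation makes $(\str{M}_1,\wrld:m_1)$ and $(\str{M}_2,\wrld:m_2)$ $\mathcal{F}$-elementarily equivalent, while $\phi(x)$ separates them. Applying Theorem~\ref{th:lindstrom} converts the separation into a pair of $\hyFeat$-$\omega$-bisimilar pointed structures still distinguished by $\phi(x)$, contradicting the assumed invariance.

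The conceptual heavy lifting---bridging elementary equivalence and $\omega$-bisimilarity past first-order undefinability---has already been absorbed into Theorem~\ref{th:lindstrom}, which is where the finite-signature hypothesis and the Löwenheim--Skolem step do their work. What remains at this level is essentially bookkeeping: checking that $\ST_x$ commutes with the Boolean operations used, that $T$ and $\Gamma$ are closed under the operations needed to drive the contradiction, and that the easy direction does not founder on a minor initialisation issue around Lemma~\ref{lem:karp}(ii)(a).
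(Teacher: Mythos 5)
Your proposal is correct and follows essentially the same route as the paper: the easy direction via Lemma~\ref{lem:karp} together with~\eqref{eq:SE}, and the hard direction by producing a pair of $\mathcal{F}$-elementarily equivalent pointed models separated by $\phi$ and invoking Theorem~\ref{th:lindstrom} to contradict invariance under $\hyFeat$-$\omega$-bisimulations. The only divergence is the final compactness bookkeeping---you extract $\phi$ as a single conjunction of its hybrid consequences, whereas the paper covers $\Mod(\phi)$ by the types $\http{M}(\wrld:m)$ and writes $\phi$ as a finite disjunction of finite conjunctions---which is an inessential repackaging of the same argument.
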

\begin{proof}
The forward direction follows immediately by Lemma~\ref{lem:karp} and
property~\eqref{eq:SE}.  For the backward direction, first, we show that
\begin{equation}\tag{i}\label{eq:i}\textstyle
\Mod(\phi) = \bigcup_{(\str{M}, \wrld:m) \in \Mod(\phi)} \Mod(\http{M}(\wrld:m)).
\end{equation}     

Assume that $(\str{M}_2, m_2) \in \bigcup_{(\str{M}, \wrld:m) \in \Mod(\phi)} \Mod(\http{M}(\wrld:m))$.
It follows that $\str{M}_2 \fomodels \http{M_{\text{$1$}}}(\wrld:m_1)[\wrld:m_2]$
for some pointed model $(\str{M}_1, \wrld:m_1)$  such that
$\str{M}_1\fomodels\phi[\wrld:m_1]$. Note that for any formula $\psi$ equivalent to
the translation of a hybrid sentence, we have
$\str{M}_1  \fomodels \psi[\wrld:m_1]$ if and only if
$\str{M}_2,\fomodels\psi[\wrld:m_2]$.
Therefore, $(\str{M}_1,\wrld:m_1)$ and $(\str{M}_2,\wrld:m_2)$ are $\mathcal{F}$-elementary equivalent in their hybrid language, in symbols,
$(\str{M}_1,\wrld:m_1)\equiv(\str{M}_2,\wrld:m_2)$.
Assume for reductio that $\str{M}_2 \nvDash \phi[\wrld:m_2]$, so $\str{M}_2
\fomodels \neg \phi[\wrld:m_2]$. 
By Theorem~\ref{th:lindstrom}, there exist $(\str{N}_1, \wrld:n_1)$ and $(\str{N}_2, \wrld:n_2)$ $\hyFeat$-$\omega$-bisimilar such that 
$\str{N}_1\fomodels \phi[\wrld:n_1]$ and $\str{N}_2\fomodels \neg\phi[\wrld:n_2]$, which is a contradiction with the invariance of $\phi$ under $\hyFeat$-$\omega$-bisimulation.

Secondly, take an arbitrary $(\str{M}, \wrld:m) \in \Mod(\phi)$.  Note that~\eqref{eq:i} implies that every model of
$\http{M}(\wrld:m)$ must be a model of $\phi$. But then $\http{M}(\wrld:m) \cup \{\neg
\phi\}$ is unsatisfiable.  By compactness of first-order logic, $\Psi_{(\str{M},
  \wrld:m)} \cup \{\neg \phi\}$ is unsatisfiable for some finite $\Psi_{(\str{M},
  \wrld:m)}\subseteq \http{M}(m)$ (and we can pick a unique one for each
$(\str{M},\wrld:m)$, if necessary using the axiom of choice). Hence, $\bigwedge
\Psi_{(\str{M}, \wrld:m)} $ logically implies $\phi$. Then using~\eqref{eq:i} we
obtain,
\begin{equation}\tag{ii}\label{eq:ii}\textstyle
\Mod(\phi) = \bigcup_{(\str{M}, \wrld:m) \in
  \Mod(\phi)} \Mod(\bigwedge \Psi_{(\str{M}, \wrld:m)})
\ \text{.} 
\end{equation}
However, this means that the set $\{\phi\} \cup \bigl\{ \neg \bigwedge
\Psi_{(\str{M}, \wrld:m)}: (\str{M},m) \in \Mod(\phi)\bigr\}$ is unsatisfiable, and by
compactness again, for some finite $\Theta \subseteq \{\neg \bigwedge
\Psi_{(\str{M}, \wrld:m)} : (\str{M}, \wrld:m) \in \Mod(\phi)\}$, the set $\{ \phi \} \cup
\Theta$ is unsatisfiable. Consequently, $\Mod(\phi) \subseteq \Mod(\neg
\bigwedge \Theta)$.  By its definition $\Theta$ is logically equivalent to
\begin{equation*}\textstyle
\neg
\bigwedge \Psi_{(\str{M}_1, \wrld:m_1)}\wedge \neg \bigwedge \Psi_{(\str{M}_2, \wrld:m_2)}
\wedge\dots\wedge \neg \bigwedge \Psi_{(\str{M}_k, \wrld:m_k)}
\end{equation*}
for some $k$, and
therefore we have that $\neg\bigwedge\Theta$ is equivalent to
\begin{equation*}\textstyle
\bigwedge
\Psi_{(\str{M}_1, \wrld:m_1)}\vee \bigwedge \Psi_{(\str{M}_2, \wrld:m_2)} \vee\dots\vee
\bigwedge \Psi_{(\str{M}_k, \wrld:m_k)}
\ \text{.}
\end{equation*}
Hence
\begin{equation*}\textstyle
\Mod(\neg \bigwedge \Theta) \subseteq 
\bigcup_{(\str{M}, \wrld:m) \in \Mod(\phi)} \Mod(\bigwedge \Psi_{(\str{M}, \wrld:m)})
\end{equation*}
so, using~\eqref{eq:ii} we get, 
\begin{equation}\tag{iii}\label{eq:iii}\textstyle
\Mod(\phi) = \Mod(\neg \bigwedge \Theta).
\end{equation}
But now note that $\neg \bigwedge \Theta$ is equivalent to a translation of a
hybrid sentence, since the translations are closed under finite conjunctions and
disjunctions.
\end{proof}

The next result follows easily by the methods we have developed.

\begin{theorem}\label{thm:axiomatisation}
Consider a hybrid language defined over a finite signature $\hysig$ with features from $\hyFeat$.
 Then, $K$ is axiomatisable by a hybrid sentence $\phi$ over $\fohysig$ if and only if for some some $\ell$, the class $K$ is closed under $\hyFeat$-$(\omega,\ell)$-bisimulations. 
\end{theorem}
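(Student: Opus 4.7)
The plan is to use Lemma~\ref{lem:karp} as the bridge between model-theoretic bisimilarity and sentence-level agreement: it already characterises the existence of an $\hyFeat$-$(\omega,\ell)$-bisimulation with constant sequences at level $0$ and a chain of inclusions in terms of agreement on all hybrid sentences of degree at most $\ell$. Combining this equivalence with the standard observation that, over a finite signature, there are only finitely many pairwise inequivalent hybrid sentences of bounded degree, should yield both directions.

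For the forward direction, suppose $K = \Mod(\phi)$ for a hybrid sentence $\phi$, and set $\ell = \dg(\phi)$. If $(\str{M},\wrld:m) \in K$ and $(\str{M}',\wrld:m')$ are related by an $\hyFeat$-$(\omega,\ell)$-bisimulation of the kind appearing in Lemma~\ref{lem:karp}(ii), then Lemma~\ref{lem:karp} immediately yields $\str{M},\wrld:m \hymodels \phi$ iff $\str{M}',\wrld:m' \hymodels \phi$, forcing $(\str{M}',\wrld:m') \in K$. Hence $K$ is closed under $\hyFeat$-$(\omega,\ell)$-bisimulations.

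For the converse, fix $\ell$ witnessing closure of $K$ under $\hyFeat$-$(\omega,\ell)$-bisimulations. Since $\hysig$ is finite and, in a hybrid \emph{sentence} of degree at most $\ell$, only finitely many distinct world variables can appear (the binding operators $\store$ and $\exists$ both increase degree, so at most $\ell$ of them can nest), a normal-form argument shows that, up to logical equivalence, there are only finitely many hybrid sentences of degree at most $\ell$ over $\hysig$. Consequently, for each $(\str{M},\wrld:m) \in K$ the conjunction $\phi_{(\str{M},\wrld:m)}$ of all such sentences true at $(\str{M},\wrld:m)$ is equivalent to a single hybrid sentence of degree at most $\ell$, and the family $\{\phi_{(\str{M},\wrld:m)} : (\str{M},\wrld:m) \in K\}$ falls into finitely many equivalence classes. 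Taking a disjunction of representatives produces a single hybrid sentence $\phi$ of degree at most $\ell$.

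To verify that $\phi$ axiomatises $K$, note first that every $(\str{M},\wrld:m) \in K$ satisfies its own $\phi_{(\str{M},\wrld:m)}$ and hence $\phi$. Conversely, if $(\str{N},\wrld:n) \hymodels \phi$, choose $(\str{M},\wrld:m) \in K$ with $(\str{N},\wrld:n) \hymodels \phi_{(\str{M},\wrld:m)}$; by construction the two pointed models then agree on all hybrid sentences of degree at most $\ell$, so Lemma~\ref{lem:karp} supplies an $\hyFeat$-$(\omega,\ell)$-bisimulation between them meeting the initial constant-sequence and chain-inclusion conditions, and closure of $K$ places $(\str{N},\wrld:n)$ in $K$. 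The main obstacle, as usual in such arguments, is the finite-cardinality claim about inequivalent degree-bounded hybrid sentences; once this standard normal-form fact is in place, everything else reduces to routine bookkeeping on top of Lemma~\ref{lem:karp}.
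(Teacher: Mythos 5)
Your proposal is correct and follows essentially the same route as the paper: the forward direction is Lemma~\ref{lem:karp} applied with $\ell = \dg(\phi)$, and the converse builds the axiomatising sentence as a (finite, up to logical equivalence) disjunction over $K$ of the conjunctions of all degree-at-most-$\ell$ sentences true at each pointed model, then invokes Lemma~\ref{lem:karp} and closure under $\hyFeat$-$(\omega,\ell)$-bisimulations. Your explicit justification of the finiteness of inequivalent degree-bounded sentences (bounding the number of relevant world variables by the nesting of binders) is a point the paper leaves implicit, but it is the same argument.
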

\begin{proof}
Let $\phi$ be a hybrid sentence axiomatising $K$, and let $\ell$ be the degree of $\phi$. 
Then the left-to-right direction follows by Lemma~\ref{lem:karp}.  
For the converse direction, assume $K$ is closed under
$\hyFeat$-$(\omega,\ell)$-bisimulations. Since the language has finitely
many atoms there are only finitely many formulas of degree at most $\ell$
up to logical equivalence.  Let $\htp{\ell}{M}(\wrld:m)$ denote the set of sentences of degree at most $\ell$ satisfied by $(\str{M},\wrld:m)$. 
Then for any model $(\str{M}, \wrld:m) \in K$, the set $\htp{\ell}{M}(\wrld:m)$ is finite.  Hence, $\bigvee_{(\str{M}, \wrld:m) \in K} \bigwedge\htp{\ell}{M}(\wrld:m)$ can be seen as a finitary disjunction up to
logical equivalence, and we may call it $\phi$.   
Then for any pointed model $\str{M}, \wrld:m \hymodels \phi$, we must have that $(\str{M}, \wrld:m) \in K$ by Lemma~\ref{lem:karp} again, as $K$ is closed under $\hyFeat$-$(\omega,\ell)$-bisimulations.
\end{proof}

\section{Undecidability of invariance}

In van Benthem~\cite{vB96} it is shown that invariance under standard
bisimulations is undecidable for first-order formulas.  Hodkinson and
Tahiri~\cite{HT10}, using the same proof, lift van Benthem's result to
quasi-injective bisimulations and versions thereof. Indeed, the same proof shows
undecidability of invariance under about every nontrivial relation between
models, as we will now demonstrate.  To make the argument applicable to
arbitrary signatures, we will now define a version of disjoint union of models
applicable to arbitrary signatures.

\begin{definition}\label{def:disjoint}
Let $\str{A}$ and $\str{B}$ be models of some signature $\fosig$.
We define $\str{A}\uplus\str{B}$ to be the model whose universe
is $A\uplus B$, the interpretation of a relation $R$ is
$R^\str{A}\uplus R^\str{B}$, the interpretation of a $k$-ary function
$f(x_1,\dots, x_k)$ is defined by
\begin{equation*}
f^{\str{A}\uplus\str{B}}(u_1,\dots,u_k) =
\begin{cases}
f^{\str{A}}(u_1,\dots,u_k), & \text{if } (u_1,\dots,u_k)\in A^k\\
f^{\str{B}}(u_1,\dots,u_k), & \text{if } (u_1,\dots,u_k)\in B^k\\
u_i, & \text{otherwise with least $i$ s.\,t.\ $u_i \in A$}
\end{cases}  
\end{equation*}
and finally the interpretation of a constant $c$ is $c^\str{A}$.
\end{definition}  

Note that under this definition we have that $\str{A}\fomodels\phi$ if and only if
$\str{A}\uplus\str{B}\fomodels \phi^A$, where $\phi^A$ is the relativisation of
$\phi$ to $A$.

\begin{definition}\label{def:genuine}
Let $\fosig$ be a first-order signature. A relation $S$ between $\fosig$-models
will be called \emph{genuine} if it satisfies the following three conditions:
\begin{enumerate}[label={(\alph*)}, widest={b}]
\item There exist a formula $\varphi_S(x)$ with $x$ free, and models
$\str{A}$ and $\str{B}$ such that
$\str{A}$ and $\str{B}$ are $S$ related, but
for some $a\in A$ and some $b\in B$ we have $\str{A}\fomodels\varphi_S(a)$  and 
$\str{B}\not\fomodels\varphi_S(b)$ (nontriviality).
\item If $\str{A}$ and $\str{B}$ are $S$ related, then
  for any model $\str{C}$, the models
$\str{A}\uplus\str{C}$ and $\str{B}\uplus\str{C}$ are $S$
related (preservation under common disjoint extensions).
\item If $\str{A}$ and $\str{B}$ are $S$ related, and
$\str{A}'$ and $\str{B}'$ are expansions of $\str{A}$ and
$\str{B}$ to a signature $\fosig' \supseteq \fosig$, then
$\str{A}'$ and $\str{B}'$ are $S$ related (stability under language
expansions).
\end{enumerate}  
\end{definition}

\begin{lemma}\label{lem:undec}
For any first-order signature $\fosig$ and any genuine relation $S$,
it is undecidable whether a first-order sentence $\sigma$ is invariant under $S$.
\end{lemma}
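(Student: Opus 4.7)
I plan to reduce from the undecidability of first-order satisfiability. Given any sentence $\sigma$ over a first-order signature $\fosig'$ disjoint from $\fosig$, I will construct a sentence $\tau$ over the combined signature $\hat\fosig = \fosig \cup \fosig' \cup \{V, c\}$, where $V$ is a fresh unary predicate and $c$ a fresh constant, so that $\tau$ is $S$-invariant if and only if $\sigma$ is unsatisfiable. Condition (c) of Definition~\ref{def:genuine} extends $S$ canonically from $\fosig$-models to $\hat\fosig$-models, so this reduction gives undecidability of $S$-invariance for sentences over $\hat\fosig$.

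The sentence $\tau$ is
\[
\tau \;=\; \sigma^V \wedge \exists x\, V(x) \wedge \varphi_S^{\neg V}(c),
\]
where $\sigma^V$ is $\sigma$ with every quantifier relativised to $V$, and $\varphi_S^{\neg V}(c)$ is $\varphi_S$ with every quantifier relativised to $\neg V$ and its free variable replaced by $c$. Without loss of generality $\sigma$ and $\varphi_S$ are constant-free (one may existentially bind any constants up front). If $\sigma$ is unsatisfiable then $\sigma^V \wedge \exists x\, V(x)$ has no model, so $\tau \equiv \bot$, which is trivially $S$-invariant.

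If instead $\sigma$ is satisfiable, pick $\str{D} \fomodels \sigma$ and expand it to a $\hat\fosig$-model $\str{D}^+$ with $V^{\str{D}^+} = D$, with $c$ interpreted arbitrarily in $D$, and with the $\fosig$-symbols interpreted trivially. Expand $\str{A}$ and $\str{B}$ from condition (a) to $\hat\fosig$-models $\str{A}^+$ and $\str{B}^+$ with $V$ empty, $c^{\str{A}^+} = a$, $c^{\str{B}^+} = b$, and $\fosig'$-symbols trivially interpreted. By (c), $\str{A}^+$ and $\str{B}^+$ are $S$-related, and by (b) so are $\str{A}^+ \uplus \str{D}^+$ and $\str{B}^+ \uplus \str{D}^+$. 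Both disjoint unions satisfy $\sigma^V$ (the $V$-submodel reduces to $\str{D}$) and $\exists x\, V(x)$; however, $\varphi_S^{\neg V}(c)$ holds in $\str{A}^+ \uplus \str{D}^+$ (the $\neg V$-submodel reduces to $\str{A}$ and $\str{A} \fomodels \varphi_S(a)$) while it fails in $\str{B}^+ \uplus \str{D}^+$ (since $\str{B} \nvDash \varphi_S(b)$). Hence $\tau$ distinguishes two $S$-related structures, witnessing non-invariance.

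The main obstacle is careful bookkeeping for the relativisations: one must verify that in $\str{A}^+ \uplus \str{D}^+$ the formula $\sigma^V$ genuinely reflects satisfaction in $\str{D}$, and $\varphi_S^{\neg V}(c)$ genuinely reflects satisfaction in $\str{A}$. This uses the specific shape of Definition~\ref{def:disjoint}: $\fosig$-function applications on $A$-inputs land in $A$, $\fosig'$-function applications on $D$-inputs land in $D$, and $\fosig$-constants in $\str{A} \uplus \str{D}$ are inherited from $\str{A}$. Once constants are handled as above, these observations suffice to confine the two relativised formulas to their intended components and complete the verification.
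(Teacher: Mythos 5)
Your reduction is in essence the paper's own argument run in dual form: the paper reduces \emph{validity} to invariance via the test formula $\varphi_S(x)\vee(\exists x\,P(x)\rightarrow\sigma^P)$ evaluated at a fresh constant, while you reduce \emph{satisfiability} via the conjunction $\sigma^V\wedge\exists x\,V(x)\wedge\varphi_S^{\neg V}(c)$; both proofs use exactly the same ingredients, namely conditions (a)--(c) of Definition~\ref{def:genuine}, the disjoint union of Definition~\ref{def:disjoint}, relativisation, and a fresh constant naming $a$ respectively $b$. Your explicit relativisation of $\varphi_S$ to $\neg V$, justified through the first-component conventions of Definition~\ref{def:disjoint} (constants and $\fosig$-terms over $A$-arguments stay in $A$), is sound and in fact makes explicit a transfer step that the paper leaves implicit.

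There are two points to repair. First, the step ``if $\sigma$ is unsatisfiable then $\sigma^V\wedge\exists x\,V(x)$ has no model'' is only correct when $\fosig'$ is relational: with a function symbol, take $\sigma=\forall x\,P(x)\wedge\forall x\,\neg P(f(x))$, which is unsatisfiable, yet its $V$-relativisation together with $\exists x\,V(x)$ is satisfiable on a two-element domain where $V$ is not closed under $f$. As stated (``any sentence $\sigma$ over a first-order signature $\fosig'$''), the direction ``$\sigma$ unsatisfiable $\Rightarrow \tau$ invariant'' therefore fails; the fix is easy and should be said explicitly---either take as reduction source satisfiability over a single binary relation (already undecidable), or conjoin to $\tau$ axioms asserting that $V$ is closed under the $\fosig'$-operations. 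Second, your ``without loss of generality $\varphi_S$ is constant-free'' is not justified: existentially binding a constant weakens the formula and may destroy $\str{B}\not\fomodels\varphi_S(b)$, which is part of what genuineness gives you. Fortunately it is also unnecessary, since, as you yourself note at the end, Definition~\ref{def:disjoint} interprets $\fosig$-constants in $\str{A}\uplus\str{D}$ by their values in the first component, so the relativised $\varphi_S^{\neg V}(c)$ reflects satisfaction in $\str{A}$ (respectively $\str{B}$) even when $\varphi_S$ contains constants or function symbols. With these two adjustments your proof is correct.
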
  
\begin{proof}
Expand $\fosig$ by adding a unary predicate $P$ and a constant $d$,
and for any $\fosig$-sentence
$\sigma$, let $\psi_\sigma$ be the formula
$\varphi_S(x)\vee (\exists x\, P(x)\rightarrow\sigma^P)$, where
$\sigma^P$ is the relativisation of $\sigma$ to $P$. We will show that
$\sigma$ is valid if and only if $\psi_\sigma$ is invariant under $S$, thereby
reducing invariance under $S$ to validity.

If $\sigma$ is valid, then so is $\psi_\sigma$, hence $\psi_\sigma$ is invariant
under $S$. If $\sigma$ is not valid, let $\str{C}$ be a model with
$\str{C}\fomodels\neg\sigma$. Since $S$ is genuine, we have $S$-related models
$\str{A}$ and $\str{B}$ such that $\str{A}\fomodels\varphi_S(a)$ and
$\str{B}\not\fomodels\varphi_S(b)$ for some $a\in A$ and some $b\in B$.  Let
$\str{N}$ be $\str{A}\uplus\str{C}$, and let $\str{M}$ be
$\str{B}\uplus\str{C}$, where $P^\str{N} = C$, $P^\str{M} = C$, $d^\str{N} = a$,
and $d^\str{M} = b$. Then, we have $\str{N}\fomodels\phi_\sigma(d^\str{N})$, and
$\str{M}\not\fomodels\phi_\sigma(d^\str{M})$. But $\str{N}$ and $\str{M}$ are
$S$-related, showing that $\psi_\sigma$ is not preserved under $S$.
\end{proof}

It should be clear that $\hyFeat$-$\omega$-bisimilarity, for any hybrid language built with features from $\hyFeat$, is a genuine relation in the sense of Definition~\ref{def:genuine}. The result below is then immediate.

\begin{corollary}
Invariance under $\hyFeat$-$\omega$-bisimulations is undecidable, for any hybrid language built with $\hyFeat$.
\end{corollary}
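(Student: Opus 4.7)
The plan is to apply Lemma~\ref{lem:undec}: it suffices to show that $\hyFeat$-$\omega$-bisimilarity between $\fohysig$-models is a genuine relation in the sense of Definition~\ref{def:genuine}, which requires verifying the three conditions (a), (b), and (c).

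For nontriviality (a), I would take $\str{A}$ to be any Kripke structure with at least two points $a, b$ of different atomic types (e.g., one satisfying some $P\in\foProp$ and the other not, or one named by a nominal and the other unnamed), and set $\str{B}=\str{A}$. Then $\str{A}$ and $\str{B}$ are trivially $\hyFeat$-$\omega$-bisimilar via the identity lifted to all sequence levels, while the first-order formula $\varphi_S(x)\equiv P(x)$ (or the analogue using a nominal) satisfies $\str{A}\fomodels\varphi_S(a)$ and $\str{B}\not\fomodels\varphi_S(b)$. For preservation under common disjoint extensions (b), given an $\hyFeat$-$\omega$-bisimulation $(B_k)_{k\in\omega}$ between $\str{A}$ and $\str{B}$ and an arbitrary $\fohysig$-model $\str{C}$, I would construct a bisimulation $(B_k')_{k\in\omega}$ between $\str{A}\uplus\str{C}$ and $\str{B}\uplus\str{C}$ by gluing $(B_k)$ with the diagonal on $C$ position-by-position: a pair of $(k+1)$-sequences lies in $B_k'$ iff at each position either both entries are in the $A/B$-part and the induced subsequences are compatible with $(B_k)$, or both entries are the same element of $C$. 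Because the accessibility of the disjoint union splits into the two component relations with no crossing edges, the forth/back/ext clauses transfer; nominals of the disjoint unions come from $\str{A}$ and $\str{B}$ by Definition~\ref{def:disjoint}, so (nom) and (atn) transfer from $(B_k)$; and the sequence-updating clauses (bind), (atv), (ex-f), (ex-b) insert either the current world or an existential witness, each of which can be chosen in a fixed component on both sides, so they preserve the matching scheme. For stability under language expansions (c), it suffices to observe that $\hyFeat$-$\omega$-bisimilarity only constrains $R$, the nominals, and the propositional valuations of $\hysig$; any expansion of $\fohysig$ by further first-order symbols has no effect on an existing bisimulation.

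Having verified (a)--(c), we invoke Lemma~\ref{lem:undec} to conclude. The main obstacle is the careful bookkeeping in (b): formalising ``the induced subsequences are compatible with $(B_k)$'' when the sequences mix $A/B$-elements with $C$-elements requires threading through the $B_k$ relations correctly, leveraging (ext) to pad levels as needed; once the matching scheme is fixed, each clause of Definition~\ref{def:bisimulation} follows by inspection from the locality of the operations involved.
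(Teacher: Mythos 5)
Your proposal matches the paper's proof: the paper likewise just observes that $\hyFeat$-$\omega$-bisimilarity is a genuine relation in the sense of Definition~\ref{def:genuine} and then invokes Lemma~\ref{lem:undec}, leaving the verification of (a)--(c) implicit, which you carry out along exactly those lines. One refinement for your condition (b): rather than asking that the \emph{induced subsequences} be compatible with $(B_k)$ --- these can shrink when (bind) or (ex) overwrites an $A/B$-position with a $C$-element, and the $k$-bisimulation clauses provide no deletion rule --- it is cleaner to demand full-length fill-in witnesses $(\wrlds:a,a^*)\in A^k\times A$ and $(\wrlds:b,b^*)\in B^k\times B$ related by $B_k$ and agreeing with the mixed sequences (and current points, when these lie in $A$ resp.\ $B$) on the $A/B$-positions; with that formulation every clause, including (ext), transfers exactly as you describe.
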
  

\section{Final remarks}\label{sec:final}
Bisimulations in general, and the ones considered by us in particular, are equivalence relations on classes of models. 
Any reasonable notion of bisimulation is weaker than isomorphism, of course. 
Weaker conditions imposed on a bisimulation are easier to satisfy, so more models are bisimilar, hence fewer formulas are invariant under weak bisimulations. 
Conversely, the more we demand of a bisimulation, the \emph{more} formulas will be invariant.  
The threshold is reached when we demand enough to get a concept at least as strong as  elementary equivalence, since then \emph{all} formulas will be invariant. 
In our case,  for $\hyFeat \supseteq\{\exists,\jump\}$ the notion of $\hyFeat$-$\omega$-bisimulation is strictly stronger than
elementary equivalence; hence, for such an $\hyFeat$, invariance under $\hyFeat$-$\omega$-bisimulations is trivial.

\begin{center}\vspace*{.8ex plus 10pt}
\begin{tabular}{ll}
\toprule%\hline
Hybrid features & Invariance under \\
\midrule%\hline  
$\emptyset$ & Bisimulations~=~$\emptyset$-$\omega$-bisimulations \\
  $\{\store\}$ w/o nominals & $\{\store\}$-$\omega$-bisimulations\\
  $\{\store\}$ with nominals & $\{\store,\Nom\}$-$\omega$-bisimulations\\
  $\{\jump\}$ & $\{\jump\}$-$\omega$-bisimulations\\
$\{\jump,\store\}$ & $\{\jump,\store\}$-$\omega$-bisimulations\\  
$\{\exists\}$ & $\{\exists\}$-$\omega$-bisimulations  \\
$\{\jump,\exists\}$ & equivalent to FOL \\  
full feature set & equivalent to FOL \\
\bottomrule%\hline  
\end{tabular}\vspace*{.8ex plus 10pt}
\end{center}
We end the article with two questions. The first one has to do with the
signature $\{\exists\}$ (or, equivalently, $\{\store,\exists\}$).  As we
mentioned a number of times, it is well known that hybrid logic in the full
signature is equivalent to first-order logic, and $\jump$ is the operator that
mimics identity, it seems of interest to compare $\{\exists\}$-bisimulations
with existing equivalence relations on models for first-order logic without
identity, such as the ones considered in Casanovas~\emph{et al.}~\cite{CDJ96}.

\begin{question}
How do $\{\exists\}$-$k$-bisimulations and $\{\exists\}$-$\omega$-bisimulations
compare to various equivalence relations on classes of models for logic without
equality? 
\end{question}  

Another interesting direction is to consider bisimulation invariance in the
finite, more precisely, whether the bisimulation characterisation
theorems presented here (or other similar theorems from the literature for
hybrid logic) still hold over finite Kripke structures. Rosen~\cite{Ros97} (see
also Otto~\cite{Ott06}) showed that the usual characterisation for modal logic
is preserved in moving to a finite context. Unfortunately, the only proofs of
this fact the authors are aware of depend heavily on the unravelling
construction, which as we saw is not always available in the hybrid context. The
connection to games seems more promising, but we are not inclined to state any
conjectures here.

\begin{question}
Which of the preservation-under-bisimulations theorems mentioned here
still hold on finite structures?   
\end{question}

% \section*{Acknowledgements}

% We are grateful to Ian Hodkinson for private communications which helped us
% shape the article.
% Badia was supported by the Australian Research Council
% grant DE220100544. Badia and Kowalski were also supported by the European
% Union's Marie Sklodowska--Curie grant no.\ 101007627 (MOSAIC project).  Kowalski
% was further supported by Polish NCN grants: OPUS-23 no.\ 2022/45/B/HS1/00606,
% and OPUS-LAP-24 no.\ 2022/47/I/HS1/02051.

\bibliographystyle{asl}
\bibliography{hl-rsl}

\end{document}